\patchcmd{\thebibliography}{\chapter*{\bibname}\@mkboth{\MakeUppercase\bibname}{\MakeUppercase\bibname}}{\section*{References}}{}{}
\theoremstyle{plain}
\newtheorem{thm}{Theorem}
\newtheorem{prop}[thm]{Proposition}
\newtheorem{lem}[thm]{Lemma}
\newtheorem{defn}{Definition}
\newtheorem{ass}{Assumption}
\newtheorem{asss}{Assumptions}
\newtheorem{oss}{Remark}
\newcommand{\FF}{\phantom{\frac{|}{|}}}
\newcommand{\ff}{\phantom{\tfrac{|}{|}}}
\title{A Feynman-Kac type formula for a fixed delay CIR model}
\author{Federico Flore\thanks{Dipartimento di Economia Aziendale - Universit\`{a} di Roma Tre, Via Silvio D'Amico,77, 00145 Roma, Italy.}
\and Giovanna
Nappo\thanks{Dipartimento di Matematica - Universit\`{a} di Roma ``Sapienza", Piazzale A. Moro 5, 00185 Roma, Italy.}}
\date{}
\begin{document}
\maketitle
\begin{abstract}
\noindent Stochastic delay differential equations (SDDE's) have been used for financial modeling. In this article, we study a SDDE obtained by the equation of a CIR process, with an additional fixed delay term in drift; in particular, we prove that there exists a unique strong solution (positive and integrable) which we call fixed delay CIR process. Moreover, for the fixed delay CIR process, we derive a Feynman-Kac type formula, leading to a generalized exponential-affine formula,  which is used to determine a bond pricing formula when the interest rate follows the delay's equation. It turns out that, for each maturity time $T$, the instantaneous forward rate is an affine function (with time dependent coefficients) of the rate process and of an auxiliary process (also depending on $T$). The coefficients satisfy a system of deterministic differential equations.
\end{abstract}
\section{Introduction}\label{sec:introduction}
In a seminal paper (\cite{CIR:SIR}) Cox, Ingersoll and Ross proposed a model for the interest rates, that has  found considerable use also as a model for volatility and other financial quantities. The model is named  Cox-Ingersoll-Ross (CIR) model or mean-reverting square root process (and is also known as Bessel-square process or Feller process),
and is expressed as the solution of the following stochastic differential equation
\begin{equation}\label{eq:CIR}\begin{cases}
dr(t)=a_r(\gamma_r-r(t))dt+\sigma_r\sqrt{r(t)}dW_r(t),\\
r(t_0)=r_0,
\end{cases}\end{equation}
where $W_r(t)$ is a standard Brownian motion and $a_r$, $\gamma_r$ and $\sigma_r$ are positive constants. There are three appealing properties why this model is used so widely.
First, Eq.~\eqref{eq:CIR} has a unique nonnegative solution for any positive initial value with probability one, which is very important since this equation is often used to model the interest rate (or volatility). Second, it is mean-reverting and the expectation of $r(t)$ converges to $\gamma_r$, the so-called long-term value, with the speed $a_r$ (see, e.g., Higham and Mao \cite{HighamMao} and the references therein).
Third, since its incremental variance is proportional to the current value, one can compute explicitly the term structure.\\
In order to better capture the properties of the empirical data, there are many extensions of the CIR model, e.g., Chan, Karolyi, Longstaff and Sander \cite{JOFI:JOFI4011} generalize the CIR model as
\[dr(t)=a_r(\gamma_r-r(t))dt+\sigma_rr^\theta(t)dW_r(t),\]
where $\theta \geq \frac{1}{2}$. As explained in  Hull and White \cite{Hull01101990}, another generalization of the CIR model can be obtained so that it is consistent with both the current term structure of interest rates and either the current volatilities of all spot interest rates or the current volatilities of all forward interest rates. In their paper, the authors consider the following version of CIR model with time-dependent parameters; i.e.,~they consider the following model
\begin{equation}\label{eq:CIR_HullWhite}\begin{cases}
dr(t)=\left[\varphi(t)+a_r(t)\left(\gamma_r-r(t)\right)\right]dt+\sigma_r(t)r^\theta(t)dW_r(t),\\
r(t_0)=r_0.
\end{cases}\end{equation}

From the financial point view, the interest has focused in models where the underlying asset's dynamics is given by a stochastic delay differential equation (SDDE). In this regard, we can cite  Arriojas, Hu, Mohammed and Pa \cite{ArrHuMohaPap}: the authors consider a market where the evolution of the stock price $S(t)$ is described by the following equation
\begin{equation}\label{eq:AHMP_equation}\begin{cases}
dS(t)&=f(t,S_t)dt+ g(S(t-b))S(t)dW(t),\quad t \in [0,T]\\
S(t) &= \varphi(t), \qquad t \in [-\tau, 0]
\end{cases}\end{equation}
where the drift coefficient $f:[0,T]\times \mathcal{C}([-\tau,0];\mathbb{R}) \rightarrow \mathbb{R}$ is a given continuous functional, $S_t\in\mathcal{C}([-\tau,0];\mathbb{R})$ stands for the segment process $S_t(u):=S(t+u)$, $u\in[-\tau,0]$, the diffusion coefficient $g$ is a continuous function, the parameters $\tau$ and $b$ are positive constants,  with $\tau \geq b$, and the process $\varphi(t)$ is $\mathcal{F}_0$-measurable with respect to the Borel $\sigma$-algebra of $\mathcal{C}([-\tau,0];\mathbb{R})$. Under the suitable hypotheses on $f$ and $g$, the authors prove that Eq.~\eqref{eq:AHMP_equation} has a pathwise unique solution. Furthermore when the drift coefficient $f(t,\eta)$ is equal to $\mu\eta(-a)\eta(0)$, i.e., $f(t,S_t)=\mu S(t-a)S(t)$, where $a$ is positive constant; setting $\tau=\max\{a,b\}$, the authors develop an explicit formula for pricing European options.
 Moreover, the authors give an alternative model for the stock price dynamics with variable delay and, also in this case, are able to develop a formula for the option price.\\
Although is not present a financial application, we can also cite  Wu, Mao and Chen \cite{WuMaoChen} where the authors generalize the Euler-Maruyama (EM) scheme to the following model with delay term in diffusion coefficient
\begin{equation}\label{eq:WMC}\begin{cases}
dS(t) &= \lambda(\mu-S(t))dt+ \sigma S(t - \tau )^\gamma\sqrt{S(t)}dW(t),\quad t\in[0,T]\\
S(\theta) &= \xi(\theta), \qquad \theta \in [-\tau , 0],
\end{cases}\end{equation}
where $\xi \in \mathcal{C}([- \tau , 0];\mathbb{R}^+)$,
and prove the strong convergence of the EM approximate solutions to the (unique and nonnegative) solution of~\eqref{eq:WMC}.\\

In this paper, we assume that the spot rate satisfies the following SDDE with fixed delay in the drift coefficient
\begin{equation}\label{eq:CIRdelay}\begin{cases}
dr(t)=[a_r(\gamma_r(t)-r(t))+b_rr(t-\tau)]dt+\sigma_r\sqrt{r(t)}dW_r(t),\\
r(t)=r_0(t) \qquad t_0-\tau\leq t \leq t_0,\end{cases}\end{equation}
with $a_r$, $\sigma_r$ positive constants,  $\gamma_r(t)$ a positive function, bounded on bounded time intervals, and $b_r\geq 0$, so that our model~\eqref{eq:CIRdelay} is a generalization of the classical CIR model~\eqref{eq:CIR}.
\\

In the first part of this paper, we focus our interests on  existence and uniqueness of the solution of Eq.~\eqref{eq:CIRdelay}. (We will refer to the unique solution as the fixed delay CIR process.) In the remaining part of the paper, we derive a  Feynman-Kac type formula in order to determine a formula for the unitary zero-coupon bond (uZCB) price  and a formula for the instantaneous forward rate.\\

The paper is organized as follows. Section \ref{sec:Existence} is devoted to state the properties of our fixed delay CIR process; in particular, under the Assumptions~\ref{asss:hp_r}, Eq.~\eqref{eq:CIRdelay} has a unique and nonnegative solution. Moreover, if the Feller condition \eqref{eq:feller's_condition_delay} holds, the solution is positive (see Theorem~\ref{thm:ExistenceUniqueness}). Furthermore, under Assumption~\ref{ass:integrabilityUnidimensional}, i.e., if the initial segment $r_0(t) $ satisfies an  integrability condition, uniformly on the interval $[t_0-\tau,t_0]$ (see \eqref{eq:SCintegrability}), then the solution is integrable, together with its supremum on any bounded interval (see Proposition~\ref{prop:integrabilityCIR}  and Remark~\ref{oss:SCintegrability}). The aim of Section~\ref{sec:TSBV} is to determine a pricing formula for uZCB. Under Assumption~\ref{ass:MarketPriceOfRisk-OneDim},  there exists a risk-neutral probability measure and hence, the financial market is arbitrage-free (see   Theorem~\ref{thm:existenceQ} and Remark~\ref{oss:MeasureQ}). Another important result of this section is an extension of the well-known Feynman-Kac formula (see Theorem~\ref{thm:term_structure_one-dimensional_CIR_delay}) that is used to determine a pricing formula for uZCB. In Section~\ref{sec:FR}, we recall the definition of instantaneous forward rate and prove that if the  spot rate is a fixed delay CIR process, then the instantaneous forward rate is a linear function of the spot rate and of another suitable process (see Theorem~\ref{thm:term_structure_derivate_delay-one-dim}); this result extends the usual formula of the CIR instantaneous forward rate. Appendix~\ref{app:ProofsCIR} is devoted to the proofs of some technical results.

\section{Properties of the Fixed Delay CIR Process}\label{sec:Existence}
Throughout this paper, unless otherwise specified, we use the following notations. Let $(\Omega,\mathcal{F}, \mathbb{P})$ be a complete probability space with a right continuous filtration $\{\mathcal{F}_t\}_{t\geq t_0}$ and let $\mathcal{F}_{t_0}$ contain all $\mathbb{P}$-null sets. Let $W_r(t)$ be a scalar Brownian motion defined on this probability space.
\\
Our aim is to use Eq.~\eqref{eq:CIRdelay}  as a model for interest rate, volatility and other financial quantities, therefore, besides existence and uniqueness of the solution, it is crucial that the solution be positive.

Actually, we examine the equation
\begin{equation}\label{eq:CIRdelay2}\begin{cases}
dr(t)=[a_r(\gamma_r(t)-r(t))+b_r r(t-\tau)]dt+\sigma_r\sqrt{|r(t)|}dW_r(t)\\
r(t)=r_0(t) \qquad t_0-\tau\leq t \leq t_0,\end{cases}
\end{equation}

Throughout this paper we make the following assumptions.
\begin{asss}\label{asss:hp_r}${}$
\begin{description}
\item[(i)]The process $W_r(t)$, $t\geq t_0$, is a Brownian motion with respect to the filtration $\mathcal{F}_t$,  so that
    $\mathcal{F}_{t_0}$ is independent of  natural filtration $\mathcal{F}^{W_r}_t$;
    \item[(ii)]the parameters $a_r$ and $\sigma_r$ are positive constants, and $b_r$ is a nonnegative constant;\\
\item[(iii)]the segment $r_0(\cdot)$ is a positive continuous random function on $[t_0-\tau, t_0]$
    such that
    \begin{equation}
    \label{eq:MeasurableCond-CIR}
    \int_{t_0-\tau}^{t}r_0(u) du < +\infty,\;\;\mathbb{P}\text{-a.s.};\end{equation}
    moreover, we require that $r_0(t)$ is a $\mathcal{F}_{t_0}$-measurable for $t_0-\tau\leq t\leq t_0$ and therefore $\sigma\{\left(r_0(u)\,; u\in[t_0-\tau,t_0]\right)\}$ is independent of $\mathcal{F}^{W_r}_t$, $t\geq t_0$;
\item[(iv)] the deterministic function $\gamma_r(t)$ is measurable, positive, and bounded on every bounded interval.
\end{description}
\end{asss}

\begin{thm}\label{thm:ExistenceUniqueness}${}$\\
Under the Assumptions~\ref{asss:hp_r}, the Eq.~\eqref{eq:CIRdelay2} admits a unique solution and the solution is nonnegative.
\\
Assume that $0\leq \underline{b}_r \leq b_r$, and that the initial segment $\underline{r}_0(t)$ is  such that $\underline{r}_0(t)\leq r_0(t)$, for $t_0-\tau\leq t\leq t_0$.  Let $r^{(b_r)}(t)$ be the solution of~\eqref{eq:CIRdelay2}, and  $r^{(\underline{b}_r)}(t)$ be the solution of~\eqref{eq:CIRdelay2}, with $\underline{b}_r$ and the initial segment $\underline{r}_0(t)$ in place of $b_r$  and  $r_0(t)$, respectively. Then
\begin{equation}\label{comparison}
r^{(\underline{b}_r)}(t) \leq r^{(b_r)}(t), \quad \text{for all $t\geq t_0$}.
\end{equation}
If moreover the following inequality holds
\begin{equation}\label{eq:feller's_condition_delay}
\sigma_r^2\leq2a_r\gamma_r(t) \quad\text{for all $t\geq t_0$},
\end{equation}
then the process $r^{(b_r)}(t)$ is positive.
\end{thm}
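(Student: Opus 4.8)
The plan is to handle all three assertions by the classical \emph{method of steps}: on each interval $I_k:=[t_0+(k-1)\tau,\,t_0+k\tau]$ (with $k\ge1$ and $I_0:=[t_0-\tau,t_0]$), Eq.~\eqref{eq:CIRdelay2} becomes a non-delayed one-dimensional CIR-type SDE with time-dependent coefficients, to which one applies standard one-dimensional results: pathwise uniqueness for SDE's with H\"older-$\tfrac12$ diffusion coefficient (Yamada--Watanabe), the one-dimensional comparison theorem, and the classical positivity of the CIR diffusion under the Feller condition. On $I_1$ the delayed term is the known function $r_0(\cdot-\tau)$, so \eqref{eq:CIRdelay2} reads
\begin{equation*}
dr(t)=\bigl[\beta_1(t)-a_r\,r(t)\bigr]\,dt+\sigma_r\sqrt{|r(t)|}\,dW_r(t),\qquad r(t_0)=r_0(t_0),
\end{equation*}
with $\beta_1(t):=a_r\gamma_r(t)+b_r\,r_0(t-\tau)$, which by Assumptions~\ref{asss:hp_r}(iii)--(iv) is $\mathcal F_{t_0}$-measurable, continuous and (pathwise) bounded on $I_1$. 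The drift is affine, hence globally Lipschitz in the state variable, the coefficients have linear growth, and $x\mapsto\sigma_r\sqrt{|x|}$ satisfies $\bigl|\sigma_r\sqrt{|x|}-\sigma_r\sqrt{|y|}\bigr|\le\sigma_r\sqrt{|x-y|}$ with $\int_{0^+}u^{-1}\,du=+\infty$; so, by Skorokhod existence together with Yamada--Watanabe pathwise uniqueness, this SDE has a unique strong solution on $I_1$. Proceeding inductively, on $I_k$ the delayed term $r(\cdot-\tau)$ is the (continuous, already constructed) solution on $I_{k-1}$, so the same argument with $\beta_k(t):=a_r\gamma_r(t)+b_r\,r(t-\tau)$ and initial datum the terminal value of the previous step gives a unique strong solution on $I_k$. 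Gluing over $k$ produces a pathwise unique continuous solution on $[t_0,\infty)$ (uniqueness propagates from one interval to the next, since two solutions that agree on $I_{k-1}$ have identical delayed terms on $I_k$).

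For nonnegativity, note $\beta_1(t)\ge0$ (because $\gamma_r>0$, $b_r\ge0$, $r_0\ge0$) and $r_0(t_0)\ge0$; comparing the $I_1$-equation with $Y\equiv0$ (the solution of $dY=-a_rY\,dt+\sigma_r\sqrt{|Y|}\,dW_r$, $Y(t_0)=0$) via the one-dimensional comparison theorem gives $r\ge0$ on $I_1$, and iterating over the $I_k$'s --- at step $k$, $\beta_k\ge0$ because $r(\cdot-\tau)\ge0$ on $I_{k-1}$ --- gives $r(t)\ge0$ for all $t\ge t_0$. In particular $\sqrt{|r|}=\sqrt r$, so the constructed process also solves \eqref{eq:CIRdelay}. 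The estimate \eqref{comparison} is obtained the same way: on $I_1$ the processes $r^{(b_r)}$ and $r^{(\underline b_r)}$ solve SDE's with the same diffusion coefficient, the same driving $W_r$ and drift intercepts $a_r\gamma_r(t)+b_r r_0(t-\tau)$ and $a_r\gamma_r(t)+\underline b_r\underline r_0(t-\tau)$; from $0\le\underline b_r\le b_r$, $0\le\underline r_0\le r_0$ we get $\underline b_r\underline r_0(t-\tau)\le b_r r_0(t-\tau)$, i.e.\ the first drift dominates the second, while $\underline r_0(t_0)\le r_0(t_0)$, so the comparison theorem gives $r^{(\underline b_r)}\le r^{(b_r)}$ on $I_1$. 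Assuming inductively $0\le r^{(\underline b_r)}\le r^{(b_r)}$ on $I_{k-1}$, on $I_k$ one has $0\le\underline b_r r^{(\underline b_r)}(t-\tau)\le b_r r^{(b_r)}(t-\tau)$ and ordered initial values at $t_0+(k-1)\tau$, so the comparison theorem yields the ordering on $I_k$; this proves \eqref{comparison}.

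For positivity under \eqref{eq:feller's_condition_delay}, use $r\ge0$ to write $r^{(b_r)}$ as the solution of
\begin{equation*}
dr^{(b_r)}(t)=\bigl[\widehat\beta(t)-a_r\,r^{(b_r)}(t)\bigr]\,dt+\sigma_r\sqrt{r^{(b_r)}(t)}\,dW_r(t),\qquad \widehat\beta(t):=a_r\gamma_r(t)+b_r\,r^{(b_r)}(t-\tau),
\end{equation*}
where $\widehat\beta(t)\ge a_r\gamma_r(t)\ge\tfrac{\sigma_r^2}{2}$ for all $t\ge t_0$ by \eqref{eq:feller's_condition_delay} and $b_r,r^{(b_r)}\ge0$. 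Let $\tilde r$ denote the time-homogeneous CIR process with parameters $(a_r,\tilde\gamma,\sigma_r)$, $\tilde\gamma:=\sigma_r^2/(2a_r)$, driven by the same $W_r$ and started at $\tilde r(t_0)=r_0(t_0)$; its drift $a_r(\tilde\gamma-x)=\tfrac{\sigma_r^2}{2}-a_r x$ is dominated by the drift $\widehat\beta(t)-a_r x$ of $r^{(b_r)}$, so the comparison theorem gives $r^{(b_r)}(t)\ge\tilde r(t)$ for all $t\ge t_0$. Since $2a_r\tilde\gamma=\sigma_r^2$, $\tilde r$ satisfies the (borderline) Feller condition and $\tilde r(t_0)=r_0(t_0)>0$ by Assumption~\ref{asss:hp_r}(iii), so $\tilde r(t)>0$ for all $t\ge t_0$ a.s.\ by the classical CIR positivity result (equivalently, the associated squared Bessel process has dimension $2$ and thus does not reach the origin); hence $r^{(b_r)}(t)\ge\tilde r(t)>0$. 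One may alternatively take $\underline b_r=0$, $\underline r_0=r_0$ in \eqref{comparison} to reduce to the time-inhomogeneous CIR process and argue as above.

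The step I expect to be delicate is positivity: one has to make sure that imposing the Feller inequality on $\gamma_r(\cdot)$ alone --- and not on some ``effective'' mean level incorporating the delay --- is enough, which is exactly the content of the bound $\widehat\beta(t)\ge a_r\gamma_r(t)$ coming from $b_r r^{(b_r)}(t-\tau)\ge0$, and then to invoke the classical boundary classification for the CIR diffusion (Feller's test) rather than a short computation, since applying It\^o's formula to $-\log r^{(b_r)}$ only shows the drift of $-\log r^{(b_r)}$ to be bounded above and still requires a localization/integrability argument to conclude non-attainment of $0$. By contrast, existence/uniqueness and the comparison estimate \eqref{comparison} are routine once the method of steps is set up; the only point needing attention is to check, at each step $I_k$, that the delayed coefficient is an already-determined continuous nonnegative function, so that the quoted one-dimensional theorems apply verbatim.
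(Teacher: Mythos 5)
Your proposal is correct and follows essentially the same route as the paper: induction over the delay intervals $[t_0+k\tau,t_0+(k+1)\tau]$, reduction on each step to a non-delayed CIR-type equation with a known adapted drift intercept (for which the paper invokes Deelstra--Delbaen/Jacod where you invoke Yamada--Watanabe), the one-dimensional comparison theorem for nonnegativity and for \eqref{comparison}, and positivity by dominating $r^{(b_r)}$ from below by a classical CIR process satisfying the Feller condition (the paper takes $\underline{b}_r=0$ in \eqref{comparison}, which you mention as your alternative). Your only added care --- checking that the delayed coefficient at each step is an already-determined continuous nonnegative adapted function --- is exactly the point the paper's terse proof leaves implicit.
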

\begin{proof}${}$\\
Existence, uniqueness and the comparison results can be achieved by induction on the intervals $[t_0+k \tau, t_0+(k+1)\tau]$, $k \geq 0$. In the first interval $[t_0, t_0+\tau]$ Eq.~\eqref{eq:CIRdelay} is a particular case of the equations
 \begin{equation}\label{eq:DDequation}dX(t)=(2\beta X(t)+\delta(t))dt+g(X(t))dW(t)\quad\text{for all $t\in[0,+\infty)$},
 \end{equation}
 studied in   Deelstra and Delbaen~\cite{deelstra_delbaen}, with $2\beta=-a_r$, $\delta(t)=b_r\,r_0(t-\tau)+ a_r \gamma_r(t)$  and $g(x)=\sigma_r \sqrt{|x|}$. As observed in the latter paper, `` \emph{Eq.~\eqref{eq:CIRdelay2} is a Dol\'{e}ans, Dade and Protter's equation, and it is shown by Jacod~\cite{Jacod}  that there
exists a unique strong solution. Extending comparison results as in Karatzas and Shreve~\cite{karatzas_shreve}  (p.\@ 293)
or Revuz-Yor~\cite{RevuzYor}  (p.\@ 394),
it is easy to check that  the solution remains nonnegative a.s.\@ (see, e.g.,
Deelstra~\cite{DeelstraPhD}),}'' and that inequality~\eqref{comparison} holds. Finally,  taking $\underline{b}_r=0$, (so that the initial segment $\underline{r}_0(t)$ is irrelevant)  under the Feller  condition~\eqref{eq:feller's_condition_delay}  the process $r^{(\underline{b}_r)}(t)$ is the classical CIR model and is   positive a.s. Then the comparison~\eqref{comparison} immediately implies that  also $r^{(b_r)}(t)$ remains positive a.s.
\end{proof}

\begin{oss}\label{oss:UiL}${}$\\
The previous result guarantees existence and strong uniqueness  of  Eq.~\eqref{eq:CIRdelay}, for all initial segment with continuous paths. Then Yamada-Watanabe theorem (see, e.g., Cherny~\cite{Cherny}),  implies weak uniqueness, i.e., uniqueness in distribution of the solutions.
\end{oss}
 The next result deals with the integrability of a fixed delay CIR process, i.e., the unique  solution of Eq.~\eqref{eq:CIRdelay}.
\begin{prop}\label{prop:integrabilityCIR}${}$\\
Suppose that on a probability space $(\Omega,(\mathcal{F}_t)_{t\geq t_0},\mathbb{P})$, the process $r(t)$ is a fixed delay CIR process, defined by Eq.~\eqref{eq:CIRdelay}.
If
\begin{equation}\label{eq:IntCondr1}
\int_{t_0-\tau}^{t_0}\mathbb{E}\left[r_0(u)\right]du<+\infty,
\end{equation}
and
\begin{equation}\label{eq:IntCondr2}
\mathbb{E}\left[r_0(t_0)\right]<+\infty,\end{equation}
then,
\begin{enumerate}
\item for all $t\geq t_0$
\begin{equation}\label{eq:integr-uniform}
\mathbb{E}\left[\sup_{t_0\leq u \leq t}r(u)\right] < \infty,
\end{equation}
\item the following formula holds, for all $t\geq t_1\geq t_0$
       \begin{equation}\label{eq:ExpValDelay-CIR}
       \mathbb{E}\left[r(t)\right]=\mathit{e}^{-a_r(t-t_1)}\mathbb{E}\left[r(t_1)\right]
       +\int_{t_1}^t\mathit{e}^{-a_r(t-u)}\left(a_r\gamma_r(u)+b_r\mathbb{E}\left[r(u-\tau)\right]
       \right)\,du.\end{equation}
\end{enumerate}
\end{prop}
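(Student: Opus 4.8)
The plan is to argue by induction on the delay intervals $I_k := [t_0+k\tau,\, t_0+(k+1)\tau]$, $k\ge 0$, in the spirit of the proof of Theorem~\ref{thm:ExistenceUniqueness}: on $I_k$ the delayed term $b_r\, r(t-\tau)$ is a \emph{known} $(\mathcal F_t)$-adapted process, read off from the solution already controlled on $I_{k-1}$ (or from $r_0$ when $k=0$), so on each $I_k$ the process $r$ solves a non-delayed CIR-type equation with an additional, integrable, drift. I would reduce the statement to the inductive claim: \emph{if}
\[
\mathbb E\Bigl[\sup_{t_0\le u\le t_0+k\tau} r(u)\Bigr] < \infty
\qquad\text{and}\qquad
\int_{t_0+(k-1)\tau}^{\,t_0+k\tau}\mathbb E[r(s)]\,ds < \infty ,
\]
\emph{then the same holds with $k+1$ in place of $k$.} For $k=0$ the first condition is $\mathbb E[r_0(t_0)]<\infty$, i.e.\ \eqref{eq:IntCondr2}, the second is exactly \eqref{eq:IntCondr1}, and on $I_0$ one has $b_r\int_{t_0}^{t_0+\tau}\mathbb E[r_0(s-\tau)]\,ds = b_r\int_{t_0-\tau}^{t_0}\mathbb E[r_0(u)]\,du<\infty$ together with $\gamma_r$ bounded on $[t_0,t_0+\tau]$ by item~(iv) of Assumptions~\ref{asss:hp_r}; so the base of the induction is supplied by the hypotheses.

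For the inductive step, part~1, I would use the variation-of-constants form of \eqref{eq:CIRdelay}. With $M(t):=\sigma_r\int_{t_0}^t e^{a_r s}\sqrt{r(s)}\,dW_r(s)$, a continuous local martingale (here $r\ge 0$ by Theorem~\ref{thm:ExistenceUniqueness}, and $r$ has continuous paths),
\[
e^{a_r t} r(t) = e^{a_r t_0} r(t_0) + \int_{t_0}^{t} e^{a_r s}\bigl(a_r\gamma_r(s) + b_r r(s-\tau)\bigr)\,ds + M(t),
\]
so that, since $e^{a_r u}\ge e^{a_r t_0}$ for $u\ge t_0$,
\[
\sup_{t_0\le u\le t} r(u) \;\le\; e^{-a_r t_0}\Bigl( e^{a_r t} r(t_0) + \int_{t_0}^{t} e^{a_r s}\bigl(a_r\gamma_r(s) + b_r r(s-\tau)\bigr)\,ds + \sup_{t_0\le u\le t}|M(u)| \Bigr),
\]
for every $t\le t_0+(k+1)\tau$. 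Taking expectations, the stochastic term is handled by first localising with $T_n:=\inf\{t\ge t_0: r(t)>n\}$ and applying the Burkholder--Davis--Gundy inequality to the stopped martingale, using $\langle M\rangle_{t\wedge T_n}=\sigma_r^2\int_{t_0}^{t\wedge T_n}e^{2a_r s}r(s)\,ds\le \sigma_r^2 e^{2a_r t}(t-t_0)\sup_{t_0\le u\le t\wedge T_n} r(u)$, then Jensen's inequality and Young's inequality to absorb a small multiple of $\mathbb E[\sup_{t_0\le u\le t\wedge T_n} r(u)]$ into the left-hand side. This produces a bound of the form $\mathbb E[\sup_{t_0\le u\le t\wedge T_n} r(u)]\le A_t+B_t\,\mathbb E[\sup_{t_0\le u\le t\wedge T_n} r(u)]^{1/2}$ in which $A_t,B_t$ are \emph{independent of $n$}: $B_t$ is the constant from the quadratic-variation estimate, and $A_t$ collects $\mathbb E[r(t_0)]<\infty$, the bound of $\gamma_r$ on $[t_0,t_0+(k+1)\tau]$, and $b_r\,\mathbb E[\int_{t_0}^{t}r(s-\tau)\,ds]<\infty$ --- the latter being precisely the inductive hypothesis, since it involves only the values of $r$ on $[t_0-\tau,t_0+k\tau]$. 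Solving this scalar inequality gives a bound uniform in $n$, and Fatou's lemma ($T_n\uparrow\infty$) yields $\mathbb E[\sup_{t_0\le u\le t}r(u)]<\infty$ for all $t\le t_0+(k+1)\tau$; in particular $\int_{t_0+k\tau}^{t_0+(k+1)\tau}\mathbb E[r(s)]\,ds<\infty$, which closes the induction and, letting $k\to\infty$, gives \eqref{eq:integr-uniform}.

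For part~2, with \eqref{eq:integr-uniform} in hand, $\mathbb E[\langle M\rangle_t]=\sigma_r^2\mathbb E[\int_{t_0}^t e^{2a_r s}r(s)\,ds]\le \sigma_r^2 e^{2a_r t}(t-t_0)\,\mathbb E[\sup_{t_0\le u\le t}r(u)]<\infty$, so $M$ is a square-integrable martingale on $[t_0,t]$ and every increment $M(t)-M(t_1)$, $t_0\le t_1\le t$, has zero mean. Writing the variation-of-constants identity from $t_1$,
\[
e^{a_r t} r(t) = e^{a_r t_1} r(t_1) + \int_{t_1}^{t} e^{a_r s}\bigl(a_r\gamma_r(s) + b_r r(s-\tau)\bigr)\,ds + \bigl(M(t)-M(t_1)\bigr),
\]
taking expectations, exchanging expectation and the $ds$-integral by Fubini--Tonelli (legitimate: $s\mapsto \mathbb E[r(s)]$ is bounded on $[t_0-\tau,t]$ by part~1 and $\gamma_r$ is bounded there), and multiplying by $e^{-a_r t}$ yields exactly \eqref{eq:ExpValDelay-CIR}.

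I expect the crux to be the circularity in part~1: the Young-type absorption step only makes sense once $\mathbb E[\sup_{t_0\le u\le t}r(u)]$ is known to be finite, which is what is being proved. The localisation by $T_n$, the observation that the resulting estimate is uniform in $n$ (which hinges on the delayed drift depending only on the already-treated interval), and Fatou's lemma together break this circularity; the remaining ingredients --- the integrating factor, Burkholder--Davis--Gundy, Jensen, Fubini--Tonelli --- are routine.
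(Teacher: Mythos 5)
Your proposal is correct and follows essentially the same route as the paper: induction on the delay intervals $[t_0+k\tau,t_0+(k+1)\tau]$, treating the delayed term $b_r r(t-\tau)$ on each interval as a known, integrable drift so that the problem reduces to a non-delayed CIR-type equation. The only difference is that the paper outsources the key moment estimate to (a step in the proof of) Lemma~1 of Deelstra and Delbaen, whereas you prove it in-line via variation of constants, localization, Burkholder--Davis--Gundy and an absorption argument --- a sound and self-contained substitute for that citation.
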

\begin{proof}${}$\\
Similarly to the previous Theorem~\ref{thm:ExistenceUniqueness}, the proof can be achieved by induction on the intervals $[t_0+k \tau, t_0+(k+1)\tau]$ by proving that
\begin{equation}\label{eq:integr-uniform-k}
\mathbb{E}\left[\sup_{t_0+k\tau\leq u \leq t_0+ (k+1)\tau}r(u)\right] < \infty.
\end{equation}
  For $k=0$
   the thesis follows by Lemma~1 in Deelstra and Delbaen~\cite{DDLongTerm}: hypotheses~\eqref{eq:IntCondr1} and~\eqref{eq:IntCondr2}, imply  the integrability condition~\eqref{eq:integr-uniform-k} (with $k=0$).
It is important to stress that this implication does not appear in the statement of Lemma~1, but it is one of the steps in the  proof of the above mentioned result (see p.\@ 166 in~\cite{DDLongTerm}).
The induction step  from $k$ to $k+1$  follows by observing that  condition~\eqref{eq:integr-uniform-k} is stronger than necessary.
\end{proof}
\begin{oss}${}$\\\label{oss:SCintegrability}
The following integrability condition (uniform on the interval $[t_0-\tau,t_0]$)
\begin{equation}\label{eq:SCintegrability}
\sup_{t\in[t_0-\tau,t_0]}\mathbb{E}\left[r_0(t)\right]<+\infty,
\end{equation}
ensures that the assumptions \eqref{eq:IntCondr1} and \eqref{eq:IntCondr2} of Proposition \ref{prop:integrabilityCIR} hold true.
\end{oss}
\section{Term Structure for Bond Valuation}\label{sec:TSBV}
Term structures of interest rates describe the relation between interest rates and bonds with different maturity times. We recall that,
by convention, a unitary zero-coupon bond  with maturity $T<+\infty$, pays one unit of cash at the prescribed date $T$ in the future, and its price is denoted by $B(t,T)$, at time $t\leq T$; it is thus clear that, necessarily, $B(T,T)=1$ for any maturity $T$.\\
At time $t$, the yield to maturity $R(t,T)$ of the uZCB $B(t,T)$ is the continuously compounded (constant) rate of return that causes the bond price to rise to one a time $T$, i.e.,
$$
B(t,T)\mathit{e}^{(T-t)R(t,T)}=1,
$$
or, solving for the yield,
\begin{equation}\label{eq:YieldToMaturity}
R(t,T):=-\frac{1}{T-t}\ln(B(t,T)).
\end{equation}
For a fixed time $t$, the curve $T\,\mapsto\,R(t,T)$ determines the term structure of interest rates.\\
\begin{defn}\label{defn:ShortRate}
The (instantaneous) spot rate $r(t)$ is defined by
\begin{equation}\label{eq:ShortRate}
r(t):=\lim_{T\rightarrow t}R(t,T).
\end{equation}
\end{defn}
In an Arbitrage-free market, the Bond price is given by
\begin{equation}\label{eq:BondPrice}
B(t,T)=\mathbb{E}^{\mathbb{Q}}\left[\mathit{e}^{-\int_t^Tr(u)du}\bigg{|}
\mathcal{F}_t\right]\quad\text{for all $t\in[t_0,T]$},\end{equation}
where $\mathbb{E}^\mathbb{Q}$ is the expectation with respect to the risk-neutral measure used by market, $r(t)$ is the $\mathcal{F}_t$-adapted instantaneous interest rate
(see, e.g., Lamberton and Lepeyre~\cite{lamberton_lapeyre} or Musiela and Rutkowski~\cite{MusieleRutkowski}).

 When $r(t)$ is a classical CIR process, the existence of a risk-neutral probability measure $\mathbb{Q}$ is guaranteed by the uniqueness of the martingale problem (see, e.g., Theorem $2.4$ in Cheredito, Filipovi\'c and Yor~\cite{ChereditoFilYor2005}).
Cheredito, Filipovi\'c and Kimmel~\cite{cheridito2007market} prove the existence of a risk-neutral probability measure $\mathbb{Q}$ taking advantage of the uniqueness in law of the involved processes (see Theorem~$1$ in Cheredito, Filipovi\'c and Kimmel~\cite{cheridito2007market}). We extend the result of Cheredito, Filipovi\'c and Kimmel to prove the existence of $\mathbb{Q}$ for a fixed delay CIR process.
\\

In what follows we assume that the interest rate $r(t)$ is the fixed delay CIR process solution of Eq.~\eqref{eq:CIRdelay}, and since we  need to consider different Brownian motions, under different probability measure, we will use the notation $W^\mathbb{P}_r(t)$ instead of $W_r(t)$.
\\

\begin{thm}\label{thm:existenceQ}${}$\\
Under Assumptions~\ref{asss:hp_r},  let $r(t)$ be the solution of
$$\begin{cases}
r(t)=r_0(t_0)+ \int_{t_0}^t \mu^\mathbb{P}(s,r(\cdot))ds+ \int_{t_0}^t \sigma_r
  \sqrt{r(s)} \, dW^\mathbb{P}_r(s),\quad t\in [t_0,T],\FF
  \\
  r(t)=r_0(t), \quad t\in [t_0-\tau, t_0],\FF
 \end{cases}
$$
where
$$\mu^\mathbb{P}(t,x(\cdot))=a_r(\gamma_r(t)-x(t)))+b_r x(t-\tau)$$
 and the parameters   $a_r$ and $\sigma_r$ and the function~$\gamma_r(t)$ satisfy the Feller condition~\eqref{eq:feller's_condition_delay}.\\
Assume  that $b^\mathbb{Q}_r\geq 0$,   $a^\mathbb{Q}_r>0$,
 the function $\gamma^\mathbb{Q}_r(t)$  is measurable, positive,  and bounded on every bounded interval,
and finally that  the  Feller condition
 \begin{equation}\label{eq:feller's_condition_delay-Q}
\sigma_r^2\leq2a^\mathbb{Q}_r\gamma^\mathbb{Q}_r(t)
\end{equation}
holds.
 Consider the functional $\mu^\mathbb{Q}(t,x(\cdot))$ so defined
\begin{equation}\label{eq:mu-Q}
\mu^\mathbb{Q}(t,x(\cdot)):=a^\mathbb{Q}_r\big(\gamma^\mathbb{Q}_r(t)-x(t)\big)+b^\mathbb{Q}_r x(t-\tau).
\end{equation}

Then, there exists a probability measure $\mathbb{Q}$, such that
 \begin{enumerate}
\item  $\mathbb{Q}=\mathbb{P}$ on   $(\Omega, \mathcal{F}_{t_0})$,
 \item for each $T>t_0$,
$\mathbb{Q}$ is equivalent to $\mathbb{P}$ on $(\Omega, \mathcal{F}_T)$,
\item
there exists  a process $W^\mathbb{Q}_r$, which is a Brownian motion under $\mathbb{Q}$, and such that
$$
r(t)=r_0(t_0)+ \int_{t_0}^t \mu^\mathbb{Q}(s,r(\cdot))ds+ \int_{t_0}^t \sigma_r
  \sqrt{r(s)} \, dW^\mathbb{Q}_r(s),\quad t\in [t_0,T].
$$
\end{enumerate}
Finally the probability measure $\mathbb{Q}$ on $(\Omega, \mathcal{F}_T)$ is defined by $d\mathbb{Q}=Z_T\, d\mathbb{P}$, where
$$
  Z_t  :=\exp\left\{ -\int_{t_0}^t \xi_r(s,r(\cdot)) \, dW^\mathbb{P}(s)-  \frac{1}{2}\,\int_{t_0}^t \xi^2_r(s,r(\cdot))\, ds\right\},
$$
with
\begin{align}\notag
\xi_r(t,r(\cdot))&:= \frac{\mu^\mathbb{P}(t,r(\cdot))-\mu^\mathbb{Q}(t,r(\cdot))}{\sigma_r\sqrt{r(t)}}
\\&= \frac{a_r\gamma_r(t) - a^\mathbb{Q}_r\gamma_r^\mathbb{Q}(t) -(a_r- a_r^\mathbb{Q}) r(t) + (b_r-b_r^\mathbb{Q})r(t-\tau)}{\sigma_r\sqrt{r(t)}}.
\label{eq:xi-r-t-r}
\end{align}

\end{thm}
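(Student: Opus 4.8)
\noindent\emph{Proof strategy.} The overall plan is to let $\mathbb Q$ be the measure with density $d\mathbb Q=Z_T\,d\mathbb P$ and to derive all three assertions from the single fact that $(Z_t)_{t\in[t_0,T]}$ is a \emph{true} $\mathbb P$-martingale (a priori it is only a nonnegative local martingale, hence a supermartingale). Granting this: since $Z_{t_0}=1$ and $\mathbb E^\mathbb P[Z_T\mid\mathcal F_{t_0}]=Z_{t_0}=1$, for $A\in\mathcal F_{t_0}$ we get $\mathbb Q(A)=\mathbb E^\mathbb P[\mathbf 1_AZ_T]=\mathbb P(A)$, which is item~1; since $Z_T>0$ $\mathbb P$-a.s.\ and $\mathbb E^\mathbb P[Z_T]=1$, the measures $\mathbb P$ and $\mathbb Q$ are equivalent on $\mathcal F_T$, i.e.\ item~2 (and, by the martingale property, the measures obtained for different $T$ are consistent); and Girsanov's theorem yields that $W^\mathbb Q_r(t):=W^\mathbb P_r(t)+\int_{t_0}^t\xi_r(s,r(\cdot))\,ds$ is a $\mathbb Q$-Brownian motion on $[t_0,T]$, so that inserting $dW^\mathbb P_r=dW^\mathbb Q_r-\xi_r(t,r(\cdot))\,dt$ into the $\mathbb P$-dynamics and using that, by the very definition~\eqref{eq:xi-r-t-r}, $\sigma_r\sqrt{r(t)}\,\xi_r(t,r(\cdot))=\mu^\mathbb P(t,r(\cdot))-\mu^\mathbb Q(t,r(\cdot))$, converts the drift into $\mu^\mathbb Q$, which is item~3. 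Thus everything reduces to the true-martingale property of $Z$.

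For that I would proceed interval by interval, as in the proofs of Theorem~\ref{thm:ExistenceUniqueness} and Proposition~\ref{prop:integrabilityCIR}: writing $t_k:=t_0+k\tau$ and factoring $Z_T$ into the product of the stochastic exponentials attached to the finitely many intervals $[t_k,t_{k+1}]$ covering $[t_0,T]$ (the last one truncated at $T$), the tower property reduces the claim to the martingale property, over one such interval and conditionally on the $\sigma$-field at its left endpoint, of $\mathcal E\big(-\int\xi_r\,dW^\mathbb P_r\big)$; on such an interval the delayed argument $r(\cdot-\tau)$ is a \emph{known} continuous input (the restriction of $r$ to the previous interval, or the segment $r_0$ for $k=0$), so $\xi_r(s,r(\cdot))$ is an adapted functional whose only singularity is the factor $1/\sqrt{r(s)}$, and we are in the situation of the non-delayed CIR result of Cheridito, Filipovi\'c and Kimmel~\cite{cheridito2007market}, whose argument I would adapt. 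Since $r$ is continuous and, by~\eqref{eq:feller's_condition_delay}, strictly positive on $[t_0,T]$, and since $\gamma_r,\gamma_r^\mathbb Q$ and $r(\cdot-\tau)$ are bounded there, one has $|\xi_r(s,r(\cdot))|\le C(\omega)/\sqrt{r(s)}$ for an a.s.\ finite random constant $C(\omega)$; moreover, applying It\^o's formula to $\log r$ identifies $\int_{t_0}^T\sigma_r^2/r(s)\,ds$ with the quadratic variation at $T$ of the continuous local-martingale part of $\log r$, which is $\mathbb P$-a.s.\ finite, so $\int_{t_0}^T\xi_r^2(s,r(\cdot))\,ds<+\infty$ $\mathbb P$-a.s. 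Let $\rho_n:=\inf\{t\ge t_0:\int_{t_0}^t\xi_r^2(s,r(\cdot))\,ds\ge n\}$, so that $\mathbb P$-a.s.\ $\rho_n>T$ for all large $n$; on $[t_0,T\wedge\rho_n]$ the integrated square of the integrand is bounded by $n$, Novikov's criterion applies to the stopped process, $Z_{\cdot\wedge\rho_n}$ is a genuine $\mathbb P$-martingale, and $d\mathbb Q_n:=Z_{T\wedge\rho_n}\,d\mathbb P$ defines a probability on $\mathcal F_T$.

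It then remains to show $\mathbb Q_n(\rho_n\le T)\to0$. By Girsanov, under $\mathbb Q_n$ the stopped process $r(\cdot\wedge\rho_n)$ solves, up to $\rho_n$, the SDE with drift $\mu^\mathbb Q$, diffusion coefficient $\sigma_r\sqrt{\,\cdot\,}$ and initial segment $r_0$. By Theorem~\ref{thm:ExistenceUniqueness}, applied with the parameters $a_r^\mathbb Q,b_r^\mathbb Q,\gamma_r^\mathbb Q$ — here the second Feller condition~\eqref{eq:feller's_condition_delay-Q} is used — that SDE has, \emph{on the original space under $\mathbb P$}, a unique strong solution $\widetilde r$ that is continuous and strictly positive on $[t_0,T]$; and, by the weak-uniqueness Remark~\ref{oss:UiL} (applied after conditioning on $\mathcal F_{t_0}$, so that the delayed input becomes deterministic), the law under $\mathbb Q_n$ of $\big(r(t\wedge\rho_n)\big)_{t\in[t_0,T]}$ coincides with the law under $\mathbb P$ of $\big(\widetilde r(t\wedge\widetilde\rho_n)\big)_{t\in[t_0,T]}$, where $\widetilde\rho_n:=\inf\{t\ge t_0:\int_{t_0}^t\xi_r^2(s,\widetilde r(\cdot))\,ds\ge n\}$. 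Exactly the same estimate as above (now using positivity of $\widetilde r$ from~\eqref{eq:feller's_condition_delay-Q}) gives $\int_{t_0}^T\xi_r^2(s,\widetilde r(\cdot))\,ds<+\infty$ $\mathbb P$-a.s., hence $\mathbb P$-a.s.\ $\widetilde\rho_n>T$ for large $n$, hence $\mathbb Q_n(\rho_n\le T)=\mathbb P(\widetilde\rho_n\le T)\to0$. Finally, from $1=\mathbb E^\mathbb P[Z_{T\wedge\rho_n}]=\mathbb E^\mathbb P[Z_T\mathbf 1_{\{\rho_n>T\}}]+\mathbb Q_n(\rho_n\le T)$, letting $n\to\infty$ and using monotone convergence in the first term (legitimate since $\mathbf 1_{\{\rho_n>T\}}\uparrow1$ $\mathbb P$-a.s.) gives $\mathbb E^\mathbb P[Z_T]=1$; so $Z$ is a true martingale and the theorem follows.

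The main obstacle is exactly this true-martingale property of $Z$: because $\xi_r$ carries the factors $1/\sqrt{r(t)}$ and $r(t-\tau)/\sqrt{r(t)}$, a direct check of Novikov's or Kazamaki's condition on all of $[t_0,T]$ cannot work, and one is driven to the localization above, whose success rests on knowing \emph{a priori} that the prospective $\mathbb Q$-dynamics itself has a non-exploding, strictly positive solution — which is precisely why the statement imposes the second Feller condition~\eqref{eq:feller's_condition_delay-Q} and why Theorem~\ref{thm:ExistenceUniqueness} and Remark~\ref{oss:UiL} have to be invoked here. A secondary, delay-specific point is that the coefficients of the SDE satisfied by $\widetilde r$ are random through $r_0$; this is dealt with by conditioning on $\mathcal F_{t_0}$, which is licit because Assumptions~\ref{asss:hp_r} make $\mathcal F_{t_0}$ — hence the whole initial segment $r_0$ — independent of the driving Brownian motion.
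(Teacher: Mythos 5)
Your proposal is correct and follows essentially the same route as the paper's proof: reduce everything to $\mathbb{E}^{\mathbb{P}}[Z_T]=1$, localize so that Novikov applies, and then transfer $\mathbb{Q}^{(n)}(\text{no explosion of }\xi_r)$ to the auxiliary process $\widetilde r$ solving the $\mathbb{Q}$-dynamics under $\mathbb{P}$ via weak uniqueness, exactly the Cheridito--Filipovi\'c--Kimmel scheme the paper adapts. The only cosmetic differences are your choice of stopping times (level sets of $\int\xi_r^2$ rather than of $|\xi_r|$, where pathwise continuity of $\xi_r$ already suffices without the $\log r$ detour) and your more explicit conditioning on $\mathcal{F}_{t_0}$ to handle the random initial segment.
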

\begin{proof}${}$\\
(See Appendix~\ref{app:ProofsCIR}).
\end{proof}

Besides  Assumptions~\ref{asss:hp_r}, we now assume some further conditions.
\begin{ass}\label{ass:integrabilityUnidimensional}${}$\\
The fixed delay CIR process $r(t)$ satisfies the  integrability condition~\eqref{eq:SCintegrability}.
\end{ass}
\begin{ass}\label{ass:MarketPriceOfRisk-OneDim}${}$\\
The market price of risk is a one-dimensional process $\xi_r(t)$ adapted with respect to the filtration~$\mathcal{F}_t$ and right continuous such that $\xi_r(t)$ is given by

\begin{equation}\label{eq:MarketPriceOfRisk-OneDim}
\xi_r(t)=\frac{\sqrt{r(t)}}{\sigma_r}\,\psi^r,\end{equation}
where  $\psi^r$ is a nonnegative constant, i.e., the risk-neutral measure $\mathbb{Q}$ is defined as the measure on the same space $(\Omega,\mathcal{F}_T)$ with Radon-Nikodym derivate given by
\begin{equation}\label{eq:Q_measure-CIR}\frac{d\mathbb{Q}}{d\mathbb{P}}\bigg{|}_{\mathcal{F}_T}=Z^{\xi_r}(T),\quad\text{that is}\quad \mathbb{Q}(F)=\int_FZ^{\xi_r}(T)\mathbb{P}(d\omega),\; F\in\mathcal{F}_T,
\end{equation}
where $Z^{\xi_r}(t)$ is the following one-dimensional $\mathbb{P}$-martingale process
\begin{equation}\label{eq:martingala_esponenziale_unidimensionale}
Z^{\xi_r}(t)=\mathit{e}^{-\int_{t_0}^t\xi_r(s)dW_r(s)+\frac{1}{2}\int_{t_0}^t\xi^2_r(s)ds}\quad t\geq t_0.\end{equation}
\end{ass}

\begin{oss}\label{oss:MeasureQ}${}$\\
Under Assumption~\ref{ass:MarketPriceOfRisk-OneDim}, by construction, the probability measures $\mathbb{P}$ and $\mathbb{Q}$ are equal on $\mathcal{F}_{t_0}$.
Consequently, Assumption~\ref{ass:integrabilityUnidimensional}  holds true also w.r.t.\@ the risk-neutral measure~$\mathbb{Q}$, and the process~$r(t)$ is again a fixed delay CIR process  w.r.t.\@ the measure $\mathbb{Q}$, with dynamics described by
\begin{equation}\label{eq:IRQequation}\begin{cases}
dr(t)=[a_r^\mathbb{Q}(\gamma^\mathbb{Q}_r(t)-r(t))+b_r^{\mathbb{Q}}r(t-\tau)]dt+\sigma_r\sqrt{r(t)}dW^\mathbb{Q}_r(t),\\
r(t_0)=r_0(t)\quad t_0-\tau\leq t\leq t_0,\end{cases}
\end{equation}
where the $\mathbb{Q}$-parameters
\begin{equation}\label{eq:Qparameter_r-BP}a^\mathbb{Q}_r=a_r+\psi^r,\quad
\gamma^\mathbb{Q}_r(t)=\frac{a_r}{a_r+\psi^r}
\,\gamma_r(t), \quad b_r^{\mathbb{Q}}=b_r
\end{equation}
  are positive, the function $\gamma^\mathbb{Q}_r(t)$ is measurable, positive, and bounded on bounded intervals, moreover the Feller condition~\eqref{eq:feller's_condition_delay} under $\mathbb{P}$ automatically implies~\eqref{eq:feller's_condition_delay-Q}, the Feller condition under $\mathbb{Q}$. Indeed, with the above positions the market price of risk  $\xi_r(t)$~in~\eqref{eq:MarketPriceOfRisk-OneDim} coincides with the process $\xi_r(t,r(\cdot))$~in~\eqref{eq:xi-r-t-r}, and Theorem~\ref{thm:existenceQ} applies.
\\
As far  as Assumption~\ref{ass:MarketPriceOfRisk-OneDim} is concerned, actually, we could define the market price of risk also as follows
$$
\xi_r(t)=\psi_0^r\frac{\sqrt{r(t)}}{\sigma_r}+\psi_1^r\frac{1}{\sigma_r\sqrt{r(t)}}+\psi_2^r\frac{r(t-\tau)}{\sigma_r\sqrt{r(t)}},
$$
where $\psi_0^r$, $\psi_1^r$ and $\psi_2^r$ are constants satisfying  suitable conditions:
indeed, if  the $\mathbb{Q}$-parameters
\begin{align}\label{eq:Q-parameters-Gen}
a_r^\mathbb{Q}=a_r+\psi_0^r,\quad\gamma^\mathbb{Q}_r(t)=\frac{a_r\gamma_r(t)-\psi_1^r}{a_r+\psi_0^r},\quad b^\mathbb{Q}_r=b_r-\psi_2^r,
\end{align}
are positive and satisfy the Feller condition~\eqref{eq:feller's_condition_delay-Q}, then Theorem~\ref{thm:existenceQ} guarantees that the measure~$\mathbb{Q}$ is a probability measure, and that, under $\mathbb{Q}$, the process $r(t)$ has stochastic differential given by~\eqref{eq:IRQequation}.
As already observed, with our choice, i.e., with $\psi_0^r=\psi^r\geq 0$, $\psi_1^r=\psi_2^r=0$,  the above conditions are automatically satisfied, while, in general, this is not the case.
\end{oss}

In order to get the bond price for a fixed maturity $T$, the idea is to get a representation  of the following functional
(slightly more general than the functional~\eqref{eq:BondPrice} )
\begin{equation}\label{eq:FKformula}
\mathbb{E}^\mathbb{Q}\left[\mathit{e}^{-\int_t^Tr
(u)du-wr(T)}\bigg{|}\mathcal{F}_t
\right],\quad t\in[t_0,T], \text{ with } T \text{ fixed, and $w\geq 0$},
\end{equation}
as a deterministic function  $v^\mathbb{Q}(t,T,r,y;w)$  evaluated in $(r,y)=(r(t),y^\mathbb{Q}(t,T;w))$, where the  process $y^\mathbb{Q}(t,T;w)$ is  defined as follows
\begin{equation}\label{eq:def_yQ}
y^\mathbb{Q}(t,T;w):=\int_{t-\tau}^t \Gamma^\mathbb{Q}(u,T;w) r(u)\mathbf{1}_{[t_0-\tau,T-\tau]}(u) du,
\end{equation}
with $\Gamma^\mathbb{Q}(t,T;w)$ a suitably chosen deterministic function (for its explicit definition see \eqref{eq:def_GammaQ}).
Note that, independently of the definition of $\Gamma^\mathbb{Q}(t,T;w)$, the following final condition holds
\begin{equation}\label{eq:def_yQ(T,T)}
y^\mathbb{Q}(T,T;w)=0.
\end{equation}
\\
It turns out that the function $v^\mathbb{Q}$ is given by
\begin{equation}\label{eq:term_structure_one-dimensional_CIR_delay}
v^\mathbb{Q}(t,T,r,y;w)=\begin{cases}
\begin{array}{cc}
\mathit{e}^{-\alpha_0^\mathbb{Q}(t,T;w)-\alpha_r^\mathbb{Q}(t,T;w)r -y}& \text{for $t<T$},\\ \mathit{e}^{-wr-y}&\text{for $t=T$},
\end{array}\end{cases}\end{equation}
where $w$ is a nonnegative parameter and the functions $\alpha_0^\mathbb{Q}(t,T;w)$ and $\alpha_r^\mathbb{Q}(t,T;w)$ are deterministic and positive, and such that $\alpha_0^\mathbb{Q}(T,T;w)=0$ and $\alpha_r^\mathbb{Q}(T,T;w)=w$. \\

In the following Theorem \ref{thm:term_structure_one-dimensional_CIR_delay}, we give the correct choice of the functions  $\Gamma^\mathbb{Q}(t,T;w)$, $\alpha_0^\mathbb{Q}(t,T;w)$ and $\alpha_r^\mathbb{Q}(t,T;w)$.
Then the Bond price is obtained by \eqref{eq:FKformula}, with $w=0$, i.e.,
\begin{equation}\label{eq:BOND}
B(t,T)=v^\mathbb{Q}\big(t,T,r(t),y^\mathbb{Q}(t,T;0);0\big)=\mathit{e}^{-\alpha_0^\mathbb{Q}(t,T;0)
-\alpha_r^\mathbb{Q}(t,T;0)r(t) -y^\mathbb{Q}(t,T;0)},
\end{equation}
and we recover  $B(T,T)=1$ by the final condition in~\eqref{eq:term_structure_one-dimensional_CIR_delay}, and by~\eqref{eq:def_yQ(T,T)}.\\

\begin{thm}\label{thm:term_structure_one-dimensional_CIR_delay}${}$\\
With the notations and under the assumptions of Theorem~\ref{thm:existenceQ},
consider the following differential system
\begin{equation}\label{eq:term_structure_diff_system_delay-BP}\begin{cases}
\frac{d}{d t}\alpha_r (t)=&\frac{1}{2}\sigma_r^2(\alpha_r(t))^2+
a^\mathbb{Q}_r\alpha_r(t)-1\phantom{xxxxxxxxxxxx}\text{for $T-\tau\leq t\leq T$},\\
\vspace{2mm}\\
\frac{d}{dt}\alpha_r(t)=&\frac{1}{2}\sigma_r^2(\alpha_r(t))^2+a^\mathbb{Q}_r
\alpha_r(t)-1-b_r\alpha_r(t+\tau)\phantom{x}\text{for $t_0\leq t\leq T-\tau$},\\
\vspace{2mm}\\
\frac{d}{dt}\alpha_0 (t )=&-a^\mathbb{Q}_r\gamma^\mathbb{Q}_r(t)\alpha_r (t)
\phantom{xxxxxxxxxxxxxxxxxxxx}\text{for
$t_0\leq t\leq T$},\end{cases}\end{equation}
with the boundary conditions
\begin{equation}\label{eq:boundary_con_system_delay}\begin{cases}
\alpha_r(T)&=w,\\
\vspace{-2mm}\\
\alpha_r(T-\tau)&=\alpha_r((T-\tau)^+),\\
\vspace{-2mm}\\
\alpha_0(T)&=0.\end{cases}\end{equation}
Then, for all $w \in \left[0,\frac{\sqrt{(a_r^\mathbb{Q})^2 +2\sigma_r^2}-a_r^\mathbb{Q}}{\sigma_r^2}\right)$,
\begin{enumerate}
  \item the system \eqref{eq:term_structure_diff_system_delay-BP}-\eqref{eq:boundary_con_system_delay} has a unique solution $\left(\alpha_r^\mathbb{Q}(t,T;w);\alpha_0^\mathbb{Q}(t,T;w)\right)$.
  \item Moreover, the functions $\alpha_r^\mathbb{Q}(t,T;w)$ and $\alpha_0^\mathbb{Q}(t,T;w)$ are continuous, positive and right differentiable w.r.t.\@ $w$.\end{enumerate}
If  furthermore Assumption~\ref{ass:integrabilityUnidimensional} holds, and  the deterministic function $\Gamma^\mathbb{Q}(t,T;w)$ is chosen as follows\footnote{Actually, the function $\Gamma^\mathbb{Q}(t,T;w)$ can assume whatever value for $T-\tau\leq t\leq T$. The choice in \eqref{eq:def_GammaQ} has been made in order to make $\Gamma^\mathbb{Q}(t,T;w)$ a continuous function.}
\begin{equation}\label{eq:def_GammaQ}
\Gamma^\mathbb{Q}(t,T;w)=\begin{cases}\begin{array}{cc}
b_r\alpha^\mathbb{Q}_r(t+\tau,T;w)&\text{for $t_0\leq t\leq T-\tau$,}\ff\\
b_rw&\text{for $T-\tau\leq t\leq T$},\ff
\end{array}\end{cases}\end{equation}
then the generalized term structure
for the one-dimensional fixed delay CIR model  is given by the fun\-ction~$v^\mathbb{Q}$ (defined in \eqref{eq:term_structure_one-dimensional_CIR_delay}), i.e.,
\begin{equation}\label{eq:term_str}\mathbb{E}^\mathbb{Q}\left[\mathit{e}^{-\int_t^Tr(u)du-wr(T)}\bigg{|}\mathcal{F}_t
\right]=v^\mathbb{Q}(t,T,r(t),y^\mathbb{Q}(t,T;w);w),\end{equation}
where, for $t_0\leq t\leq T$,
\begin{align}
y^\mathbb{Q}(t,T;w)=\int_{t-\tau}^tb_r\alpha_r^\mathbb{Q}(u+\tau,T;w)r(u)
\mathbf{1}_{[t_0-\tau,t-\tau]}(u)du.\label{eq:def_yQ_w-BP}
\end{align}

\end{thm}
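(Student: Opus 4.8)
The plan is to split the proof into two parts mirroring the structure of the statement: first the existence, uniqueness, positivity and regularity of the solution $(\alpha_r^\mathbb{Q},\alpha_0^\mathbb{Q})$ of the delay ODE system~\eqref{eq:term_structure_diff_system_delay-BP}-\eqref{eq:boundary_con_system_delay}; second, the verification that the candidate function $v^\mathbb{Q}$ in~\eqref{eq:term_structure_one-dimensional_CIR_delay}, composed with the processes $r(t)$ and $y^\mathbb{Q}(t,T;w)$, actually represents the conditional expectation~\eqref{eq:term_str}. Throughout, $T$ and $w$ are fixed, and one works backwards in time from $t=T$.

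\medskip
\textbf{Step 1: the ODE system.} On the last interval $[T-\tau,T]$ the $\alpha_r$-equation is the scalar Riccati equation $\dot\alpha_r=\tfrac12\sigma_r^2\alpha_r^2+a_r^\mathbb{Q}\alpha_r-1$ with terminal datum $\alpha_r(T)=w$; solved backwards, this is exactly the Riccati equation underlying the classical CIR bond formula, so it has an explicit solution in terms of hyperbolic functions. The point of the restriction $w\in\big[0,\frac{\sqrt{(a_r^\mathbb{Q})^2+2\sigma_r^2}-a_r^\mathbb{Q}}{\sigma_r^2}\big)$ is that this upper bound is the positive root of $\tfrac12\sigma_r^2\alpha^2+a_r^\mathbb{Q}\alpha-1=0$; for $w$ strictly below it the right-hand side is negative at $t=T$, and since backward-in-time the solution then increases as $t$ decreases but stays below that root (the root is a repelling equilibrium when integrating backwards), $\alpha_r$ remains in $[0,\text{root})$ on all of $[T-\tau,T]$ — in particular it stays positive and does not blow up. On $[t_0,T-\tau]$ the equation acquires the advanced term $-b_r\alpha_r(t+\tau)$; since $t+\tau$ already lies in the previously solved interval, this is a delay ODE solved by the method of steps: on each block $[T-(k+1)\tau,T-k\tau]\cap[t_0,T]$ the forcing term $b_r\alpha_r(t+\tau)$ is a known continuous nonnegative function, and one solves the resulting inhomogeneous scalar Riccati equation backwards with the matching condition $\alpha_r(T-k\tau)=\alpha_r((T-k\tau)^+)$. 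At each step I must check that the added nonnegative forcing does not destroy positivity and does not cause finite-time blow-up on an interval of length $\tau$; a comparison argument with the unforced Riccati solution (whose behaviour is explicit) together with the sub/super-solution bounds handles this. Once $\alpha_r^\mathbb{Q}$ is known, $\alpha_0^\mathbb{Q}(t,T;w)=\int_t^T a_r^\mathbb{Q}\gamma_r^\mathbb{Q}(s)\alpha_r^\mathbb{Q}(s,T;w)\,ds$ is obtained by direct integration, and is positive because the integrand is, using $a_r^\mathbb{Q}>0$, $\gamma_r^\mathbb{Q}>0$ and $\alpha_r^\mathbb{Q}\ge0$. Continuity and right-differentiability in $w$ follow from the explicit dependence on $w$ of the datum on $[T-\tau,T]$ and then, inductively, from smooth dependence of solutions of ODEs on parameters carried through each step.

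\medskip
\textbf{Step 2: the Feynman--Kac verification.} Define $M(t):=\exp\big(-\int_{t_0}^t r(u)\,du\big)\,v^\mathbb{Q}\big(t,T,r(t),y^\mathbb{Q}(t,T;w);w\big)$ for $t\in[t_0,T]$. The plan is to show $M$ is a $\mathbb{Q}$-martingale; then $\mathbb{E}^\mathbb{Q}[M(T)\mid\mathcal{F}_t]=M(t)$ rearranges into~\eqref{eq:term_str}, using $v^\mathbb{Q}(T,T,r,y;w)=e^{-wr-y}$ and $y^\mathbb{Q}(T,T;w)=0$ on the left and dividing by the $\mathcal{F}_t$-measurable factor $\exp(-\int_{t_0}^t r)$. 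To see the martingale property I apply It\^o's formula to $M(t)$ under $\mathbb{Q}$, where $r$ solves~\eqref{eq:IRQequation}. The subtle ingredient is that $y^\mathbb{Q}(t,T;w)=\int_{t-\tau}^t \Gamma^\mathbb{Q}(u,T;w)\,r(u)\mathbf 1_{[t_0-\tau,t-\tau]}(u)\,du$ is a process of bounded variation whose time-derivative (for a.e.\ $t$, in the regime where the indicator is active) is $\Gamma^\mathbb{Q}(t,T;w)r(t)-\Gamma^\mathbb{Q}(t-\tau,T;w)r(t-\tau)=b_r\alpha_r^\mathbb{Q}(t+\tau,T;w)\,r(t)-b_r\alpha_r^\mathbb{Q}(t,T;w)\,r(t-\tau)$ — wait, more carefully the first term uses the value $\Gamma^\mathbb{Q}(t,T;w)$ at the upper endpoint and the second uses the value at the lower endpoint $t-\tau$; on $[t_0,T-\tau]$ these are $b_r\alpha_r^\mathbb{Q}(t+\tau,T;w)$ and $b_r\alpha_r^\mathbb{Q}(t,T;w)$ respectively. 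Writing $v^\mathbb{Q}=\exp(-\alpha_0^\mathbb{Q}(t)-\alpha_r^\mathbb{Q}(t)r-y)$, the $dt$-terms in $dM/M$ are: $-r$ from the discount factor; $-\dot\alpha_0^\mathbb{Q}(t)-\dot\alpha_r^\mathbb{Q}(t)\,r(t)$ from the explicit $t$-dependence of $v^\mathbb{Q}$; $-\alpha_r^\mathbb{Q}(t)\big(a_r^\mathbb{Q}(\gamma_r^\mathbb{Q}(t)-r(t))+b_r^\mathbb{Q}r(t-\tau)\big)$ from the drift of $r$; $+\tfrac12(\alpha_r^\mathbb{Q}(t))^2\sigma_r^2 r(t)$ from the It\^o second-order term; and $-\big(\Gamma^\mathbb{Q}(t,T;w)r(t)-\Gamma^\mathbb{Q}(t-\tau,T;w)r(t-\tau)\big)$ from $dy$. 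Collecting the coefficient of $r(t)$ gives $-1-\dot\alpha_r^\mathbb{Q}+a_r^\mathbb{Q}\alpha_r^\mathbb{Q}+\tfrac12\sigma_r^2(\alpha_r^\mathbb{Q})^2-b_r\alpha_r^\mathbb{Q}(t+\tau)$, which vanishes precisely by the second ODE in~\eqref{eq:term_structure_diff_system_delay-BP} on $[t_0,T-\tau]$ (and by the first ODE on $[T-\tau,T]$, where the $\Gamma^\mathbb{Q}$-term contributes $b_r w$ cancelling against... no: on $[T-\tau,T]$ the indicator $\mathbf 1_{[t_0-\tau,t-\tau]}(u)$ kills $u$ near $t$, so only the lower-endpoint contribution survives and the coefficient of $r(t)$ is $-1-\dot\alpha_r^\mathbb{Q}+a_r^\mathbb{Q}\alpha_r^\mathbb{Q}+\tfrac12\sigma_r^2(\alpha_r^\mathbb{Q})^2$, matching the first ODE). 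The coefficient of $r(t-\tau)$ is $-\alpha_r^\mathbb{Q}(t)b_r^\mathbb{Q}+\Gamma^\mathbb{Q}(t-\tau,T;w)$; since $b_r^\mathbb{Q}=b_r$ and $\Gamma^\mathbb{Q}(t-\tau,T;w)=b_r\alpha_r^\mathbb{Q}(t,T;w)$, this is $0$ — this is exactly why $\Gamma^\mathbb{Q}$ is defined by~\eqref{eq:def_GammaQ}. The remaining term $-\dot\alpha_0^\mathbb{Q}-a_r^\mathbb{Q}\gamma_r^\mathbb{Q}(t)\alpha_r^\mathbb{Q}$ vanishes by the third ODE. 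Hence the finite-variation part of $M$ is identically zero and $dM(t)=-\alpha_r^\mathbb{Q}(t,T;w)\sigma_r\sqrt{r(t)}\,M(t)\,dW_r^\mathbb{Q}(t)$, i.e.\ $M$ is a local martingale.

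\medskip
\textbf{The main obstacle} is upgrading "$M$ is a local $\mathbb{Q}$-martingale" to "$M$ is a true $\mathbb{Q}$-martingale", which is needed to take conditional expectations. Here I use $0\le v^\mathbb{Q}\le 1$ — because $\alpha_0^\mathbb{Q},\alpha_r^\mathbb{Q}\ge0$, $y^\mathbb{Q}(t,T;w)\ge0$ (the integrand $b_r\alpha_r^\mathbb{Q}(u+\tau,T;w)r(u)\mathbf 1$ is nonnegative by positivity of $r$ from the Feller condition and positivity of $\alpha_r^\mathbb{Q}$), and $r(t)\ge0$ — so $0\le M(t)\le 1$; a bounded local martingale is a martingale. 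The only care needed is that the matching/boundary conditions~\eqref{eq:boundary_con_system_delay} make $\alpha_r^\mathbb{Q}(\cdot,T;w)$ continuous across $t=T-\tau$ so that $\Gamma^\mathbb{Q}$ is continuous and the It\^o computation has no spurious jump term, and that Assumption~\ref{ass:integrabilityUnidimensional} guarantees the stochastic integral $\int_{t_0}^\cdot \alpha_r^\mathbb{Q}\sigma_r\sqrt{r}\,M\,dW^\mathbb{Q}_r$ is well defined (here the supremum-integrability~\eqref{eq:integr-uniform} under $\mathbb{Q}$, valid by Remark~\ref{oss:MeasureQ}, together with the boundedness of $M$ and continuity of $\alpha_r^\mathbb{Q}$ on $[t_0,T]$, gives the required local integrability). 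Evaluating the martingale identity at $t$ and $T$ then yields~\eqref{eq:term_str}, and taking $w=0$ recovers~\eqref{eq:BOND}. I would also remark that the a.e.\ differentiability of $y^\mathbb{Q}$ in $t$ used in It\^o's formula is legitimate since $y^\mathbb{Q}$ is absolutely continuous in $t$ (an integral of a locally integrable process), so It\^o's formula for continuous semimartingales with an absolutely-continuous finite-variation part applies.
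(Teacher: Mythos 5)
Your proposal is correct and follows essentially the same route as the paper: the martingale/It\^o verification of Step 2 (defining $M(t)=e^{-\int_{t_0}^t r}v^\mathbb{Q}$, cancelling the $r(t)$- and $r(t-\tau)$-coefficients via the ODE system and the choice of $\Gamma^\mathbb{Q}$) is precisely the paper's argument, and your Step 1 (method of steps on $[T-(j+1)\tau,T-j\tau]$, comparison with the unforced Riccati solution, sub/supersolution bounds for positivity and non-explosion, inductive parameter differentiability) is exactly what the paper's Appendix Lemmas on the Riccati equation and its comparison properties carry out in detail. The only cosmetic difference is that you close the local-martingale-to-martingale gap by boundedness $0\le M\le 1$, whereas the paper additionally checks square-integrability of the stochastic integrand; both are valid.
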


Before giving the proof, we make some observations.

\begin{oss}\label{oss:A1-A2-A3}${}$\\
  In the framework of Remark~\ref{oss:MeasureQ}, if Assumptions~\ref{asss:hp_r} and \ref{ass:integrabilityUnidimensional} hold, together with the Feller condition~\eqref{eq:feller's_condition_delay} under $\mathbb{P}$,
 the hypotheses of Theorem~\ref{thm:existenceQ} hold   under the further condition that $a^\mathbb{Q}_r$, $b^\mathbb{Q}_r$ and~$\gamma^\mathbb{Q}_r(t)$   in~\eqref{eq:Q-parameters-Gen}  be positive and satisfy the Feller condition~\eqref{eq:feller's_condition_delay-Q}, and therefore, replacing~$b_r$ with~$b^\mathbb{Q}_r$, Theorem~\ref{thm:term_structure_one-dimensional_CIR_delay} can be applied. In particular Theorem~\ref{thm:term_structure_one-dimensional_CIR_delay} can be applied
 if Assumptions~\ref{asss:hp_r}, \ref{ass:integrabilityUnidimensional}, and \ref{ass:MarketPriceOfRisk-OneDim} hold, together with the Feller condition~\eqref{eq:feller's_condition_delay} under $\mathbb{P}$, with  $a^\mathbb{Q}_r$, $b^\mathbb{Q}_r$ and $\gamma^\mathbb{Q}_r(t)$  as in~\eqref{eq:Qparameter_r-BP} without any further assumption.
\end{oss}

In accordance to \eqref{eq:YieldToMaturity}, by \eqref{eq:BOND} and \eqref{eq:term_str}, the term structure is a linear function of $r(t)$ and of the process $y^\mathbb{Q}(t,T;0)$:
\begin{equation}\label{eq:YtM-CIRdelay}
R(t,T)=\frac{1}{T-t}\left[\alpha_0^\mathbb{Q}(t,T;0)+
\alpha_r^\mathbb{Q}(t,T;0)r(t) +\int_{t-\tau}^t b_r\, \alpha_r^\mathbb{Q}(u+\tau,T;0)\, r(u)\mathbf{1}_{[t_0-\tau,T-\tau]}(u) du\right].
\end{equation}
The previous formula extends the formula of the term structure in the classical CIR model in which the rate $R(t,T)$ is affine function of $r(t)$.
    \begin{proof}[Proof of Theorem~\ref{thm:term_structure_one-dimensional_CIR_delay}]${}$\\
    We start with some preliminary observations.
Let $r(t)$ and $y^\mathbb{Q}(t,T;w)$ be the stochastic processes with dynamics described by \eqref{eq:IRQequation} and \eqref{eq:def_yQ}, with $\Gamma^\mathbb{Q}(t,T;w)$ still to be chosen.\\
The process $y^\mathbb{Q}(t,T;w)$ has stochastic differential given by
\begin{equation}\label{eq:def_Diff_y-CIR}\begin{split}
dy^\mathbb{Q}(t,T;w)=\, &\Gamma^\mathbb{Q}(t,T;w) r(t)\mathbf{1}_{[t_0-\tau,T-\tau]}(t)dt - \Gamma^\mathbb{Q}(t-\tau,T;w)\, r(t-\tau) \mathbf{1}_{[t_0-\tau,T-\tau]}(t-\tau)dt\\
=\, &\Gamma^\mathbb{Q}(t,T;w) r(t)\mathbf{1}_{[t_0-\tau,T-\tau]}(t)dt - \Gamma^\mathbb{Q}(t-\tau,T;w)\, r(t-\tau) \mathbf{1}_{[t_0,T]}(t)dt,\FF\end{split}
\end{equation}
and, by construction, the process $y^\mathbb{Q}(t,T;w)$, evaluated in $t=T$, is zero; actually,
\[y^\mathbb{Q}(T,T;w)=\int_{T-\tau}^T \Gamma^\mathbb{Q}(u,T;w) r(u)\mathbf{1}_{[t_0-\tau,T-\tau]}(u) du=0,\]
for any choice of $\Gamma^\mathbb{Q}(t,T;w)$.\\
Define the process $z(t)$ as follows
\begin{equation}\label{eq:z(t)_BP}z(t):=\mathit{e}^{-\int_{t_0}^t r(u)du} v^\mathbb{Q}(t,T,r(t),y^\mathbb{Q}(t,T;w);w),\end{equation}
where $v^\mathbb{Q}(t,T, r,y;w)$ is defined in~\eqref{eq:term_structure_one-dimensional_CIR_delay}, with $\alpha^\mathbb{Q}_0(t,T;w)$ and $\alpha^\mathbb{Q}_r(t,T;w)$ nonnegative and continuous in $t$, and such that $\alpha_0^\mathbb{Q}(T,T;w)=0$ and $\alpha_r^\mathbb{Q}(T,T;w)=w$, i.e., satisfy the boundary conditions~\eqref{eq:boundary_con_system_delay}. The idea is to show that the process $z(t)$ is a $\mathbb{Q}$-martingale if the functions $\alpha_0^\mathbb{Q}(t,T;w)$ and $\alpha_r^\mathbb{Q}(t,T;w)$ satisfy the system \eqref{eq:term_structure_diff_system_delay-BP}-\eqref{eq:boundary_con_system_delay} and $\Gamma^\mathbb{Q}(t,T;w)$ is defined as in~\eqref{eq:def_GammaQ}.
  Indeed, if $z(t)$ is a martingale, taking into account that $y^\mathbb{Q}(T,T;w)=0$,  and that therefore
\[z(T)=\mathit{e}^{-\int_{t_0}^Tr(u)du-wr(T)-y^\mathbb{Q}(T,T;w)}=\mathit{e}^{-\int_{t_0}^Tr(u)du-wr(T)},\]
we get the result, observing that
\[z(t)=\mathbb{E}^\mathbb{Q}\left[z(T)\big{|}\mathcal{F}_t\right]=\mathbb{E}^\mathbb{Q}\left[\mathit{e}^{-\int_{t_0}^T
r(u)du-wr(T)}\bigg{|}\mathcal{F}_t\right]\quad t_0\leq t\leq T,\]
and that, by the definition \eqref{eq:z(t)_BP} of $z(t)$, we have
\[\mathit{e}^{ -\int_{t_0}^t r(u)du} v^\mathbb{Q}\left(t,T,r(t), y^\mathbb{Q}(t,T;w);w\right)=\mathbb{E}^\mathbb{Q}\left[ \mathit{e}^{ -\int_{t_0}^Tr(u)du-wr(T)}\Big| \mathcal{F}_{t}\right],\]
that is
\[v^\mathbb{Q}\left(t,T,r(t), y^\mathbb{Q}(t,T;w);w\right)=\mathbb{E}^\mathbb{Q}\left[ \mathit{e}^{ -\int_{t}^Tr(u) du-wr(T)}\Big| \mathcal{F}_{t}\right].\]
The rest of the proof is devoted to show the martingale property of $z(t)$. To this end, an important observation is that, under Assumption~\ref{ass:integrabilityUnidimensional}, recalling Remark~\ref{oss:MeasureQ}, the process $r(t)$ is integrable w.r.t.\@ $\mathbb{Q}$, as immediately follows by Proposition~\ref{prop:integrabilityCIR} with $\mathbb{Q}$ instead of $\mathbb{P}$.\\

The process $z(t)$ (defined in \eqref{eq:z(t)_BP}) has stochastic differential given by
\begin{align}\label{eq:differenziale_z(t)_BP}
\nonumber dz(t)=&d\left(\mathit{e}^{-\int_{t_0}^t r(u)du} \right) v^\mathbb{Q}\left(t,T,r(t), y^\mathbb{Q}(t,T;w);w\right)
\\
 &+\mathit{e}^{-\int_{t_0}^tr(u)du}  dv^\mathbb{Q}\left(t,T,r(t), y^\mathbb{Q}(t,T;w);w\right).
\end{align}
By It\^{o}'s formula we obtain, for $t_0\leq t\leq T$,
\begin{align*}
dz(t)=&- r(t) z(t) dt + z(t)
\left[-\left(\tfrac{\partial }{\partial t}\alpha^\mathbb{Q}_0(t,T;w)+ r(t) \tfrac{\partial }{\partial t} \alpha^\mathbb{Q}_r(t,T;w)   \right) dt \right.\ff\\
&\left.-\alpha^\mathbb{Q}_r(t,T;w) dr(t)- dy^\mathbb{Q}(t,T;w) +  \tfrac{1}{2} \sigma_r^2 r(t) (\alpha_r^\mathbb{Q}(t,T;w))^2 dt \right]\FF
\\=&- r(t)z(t) dt + z(t)
\left[-\left(\tfrac{\partial }{\partial t}\alpha^\mathbb{Q}_0(t,T;w)+ r(t) \tfrac{\partial }{\partial t} \alpha^\mathbb{Q}_r(t,T;w)   \right) dt\right.\ff
\\
&\left.-\alpha^\mathbb{Q}_r(t,T;w)
 \left[a^\mathbb{Q}_r(\gamma^\mathbb{Q}_r(t)- r(t))+b_r r(t-\tau)\right]dt
-\alpha^\mathbb{Q}_r(t,T;w) \sigma_r\,\sqrt{|r(t)|}dW^\mathbb{Q}_r(t)\right.\ff
\\
&\left.-\Gamma^\mathbb{Q}(t,T;w)r(t)\mathbf{1}_{[t_0-\tau,T-\tau]}(t)dt
+\Gamma^\mathbb{Q}(t-\tau,T;w)r(t-\tau)\mathbf{1}_{[t_0,T]}(t)dt\right.\ff
\\
&\left.+  \tfrac{1}{2} \sigma_r^2 \, r(t) (\alpha_r^\mathbb{Q}(t,T;w))^2 dt  \right].\ff
\end{align*}
If $\Gamma^\mathbb{Q}$ is chosen as in \eqref{eq:def_GammaQ}, all the terms multiplying $r(t-\tau)$ disappear.\\
Then, the process $z(t)$ is a local martingale
if and only if the finite variation term vanishes, i.e., if and only if, for $t_0\leq t\leq T$,
\begin{equation}\label{eq:drift-z=0}
\begin{split}
{} &-\tfrac{\partial }{\partial t}\alpha^\mathbb{Q}_0(t,T;w)-\tfrac{\partial }{\partial t}\alpha^\mathbb{Q}_r(t,T;w) r(t)-r(t)-\alpha^\mathbb{Q}_r(t,T;w)[a^\mathbb{Q}_r(\gamma^\mathbb{Q}_r(t)- r(t))]\ff
\\
{} &- b_r\alpha^\mathbb{Q}_r(t+\tau,T;w) r(t)\mathbf{1}_{[t_0,T-\tau]}(t)
+\tfrac{1}{2}r(t)\sigma_r^2(\alpha_r^\mathbb{Q}(t,s;w))^2=0.
\end{split}
\end{equation}
Moreover, thanks to the previous observation on the integrability of $r(t)$,
the process $z(t)$ is a (square integrable) martingale if $\alpha^\mathbb{Q}_0(u,T;w)$ and $\alpha^\mathbb{Q}_r(u,T;w)$ are nonnegative continuous functions; indeed, then $0\leq z(t)\leq 1$, and  setting $m_r(t):=\sigma_r z(t)\sqrt{|r(t)|}\alpha^\mathbb{Q}_r(t,T;w)$, we have that
\begin{align*}
\mathbb{E}^\mathbb{Q}\left[\int_{t_0}^T|m_r(t)|^2dt\right]
\leq&\sigma_r^2\max_{t_0\leq u\leq T}|\alpha^\mathbb{Q}_r(u,T;w)|^2\int_{t_0}^T\mathbb{E}^\mathbb{Q}\left[|r(t)|\right]dt<+\infty.
\end{align*}
Gathering in \eqref{eq:drift-z=0} the terms multiplying $r(t)$, we get the condition
\begin{align*}
&\left(-\tfrac{\partial}{\partial t}\alpha^\mathbb{Q}_r(t,T;w) +a^\mathbb{Q}_r\alpha^\mathbb{Q}_r (t,T;w)-b_r\alpha^\mathbb{Q}_r (t+\tau,T;w)\mathbf{1}_{[t_0,T-\tau]}(t)+\tfrac{1}{2}\sigma^2_r(\alpha^\mathbb{Q}_r(t,T;w))^2-1\right)r(t)\ff
\\
&+\left(-\tfrac{\partial}{\partial t}\alpha^\mathbb{Q}_0(t,T;w)-a^\mathbb{Q}_r\gamma^\mathbb{Q}_r(t)
\alpha^\mathbb{Q}_r(t,T;w)\right)=0.
\end{align*}
Since the previous equation holds for all $r(t)\geq0$, the functions $\alpha^\mathbb{Q}_r(t,T;w)$  and  $\alpha^\mathbb{Q}_0(t,T;w)$ solve the system~\eqref{eq:term_structure_diff_system_delay-BP} with the respective boundary conditions~\eqref{eq:boundary_con_system_delay}.\\

By Lemma~\ref{lem:LemmaTecnico} (see Appendix~\ref{app:ProofsCIR}), with $a=a^\mathbb{Q}_r$, $b=b_r$, and $\sigma=\sigma_r$, the ordinary
 differential equation
\[\begin{cases}
-\tfrac{d}{d t}\alpha_r(t) +a^\mathbb{Q}_r\alpha_r(t)-b_r\alpha_r(t+\tau)\mathbf{1}_{[t_0,T-\tau]}(t)+\tfrac{1}{2}\sigma^2_r\alpha_r^2(t)-1=0,\FF\\
\alpha_r(T)=w,\FF
\end{cases}\]
has a unique  solution $\alpha^\mathbb{Q}_r(t,T;w)$,    positive and right differentiable w.r.t\@ $w$.\\

Consequently, also the following ordinary
 differential equation
\[\begin{cases}
&-\tfrac{d}{d t}\alpha_0(t)-a^\mathbb{Q}_r\gamma^\mathbb{Q}_r(t)
\alpha_r(t)=0\quad\text{for $t_0\leq t \leq T$},\\
&\alpha_0(T)=0,
\end{cases}\]
has a unique solution $\alpha^\mathbb{Q}_0(t,T;w)$, given by
\[\alpha^\mathbb{Q}_0(t,T;w)=a^\mathbb{Q}_r\int_t^T\gamma^\mathbb{Q}_r(u)\alpha^\mathbb{Q}_r(u,T;w)du,\]
positive and right differentiable w.r.t\@ $w$.

    \end{proof}

\section{Instantaneous Forward Rate}\label{sec:FR}
Since very often the traders are interested to determine the future yield on a bond, given by  the instantaneous forward rate $f(t,T)$, we focus our interest on it.

The main result of this section states that if the spot rate is a fixed delay CIR process    $r(t)$, and if the Assumptions~\ref{asss:hp_r}, \ref{ass:integrabilityUnidimensional}, and \ref{ass:MarketPriceOfRisk-OneDim} hold, then the instantaneous forward rate is a deterministic linear function of the process $r(t)$ and another suitable process $\tilde{y}^\mathbb{Q}(t,T;0)$; that is
\begin{equation}\label{eq:FRclosedformula}
f(t,T):=\beta_0^\mathbb{Q}(t,T;0)+\beta_r^\mathbb{Q}(t,T;0)r(t)+\tilde{y}^\mathbb{Q}(t,T;0),
\end{equation}
where $\beta_0^\mathbb{Q}(t,T;0)$ and $\beta_r^\mathbb{Q}(t,T;0)$ are deterministic functions (see Theorem~\ref{thm:term_structure_derivate_delay-one-dim} and the subsequent Remark~\ref{rem:linear-repr-FR}).
 Thus we obtain a generalization of the well-known property of the classical CIR model (\cite{CIR:SIR}).
 \\
 More precisely   $\tilde{y}^\mathbb{Q}(t,T;0)$, $\beta_0^\mathbb{Q}(t,T;0)$ and $\beta_r^\mathbb{Q}(t,T;0)$ are obtained by taking the partial derivatives in $w=0$ of  $y^\mathbb{Q}(t,T;w)$, $\alpha_0^\mathbb{Q}(t,T;w)$ and $\alpha_r^\mathbb{Q}(t,T;w)$, respectively
 (see \eqref{eq:def_tildeyQ}  and \eqref{eq:beta_alpha}). In Theorem~\ref{thm:term_structure_derivate_delay-one-dim} we show that $\beta_0^\mathbb{Q}(t,T;0)$ and $\beta_r^\mathbb{Q}(t,T;0)$ are characterized as the solution of a deterministic linear system of differential equations, and finally the  process $\tilde{y}^\mathbb{Q}(t,T;0)$ has an alternative expression (see \eqref{eq:def_tildeyQ_BP}).
 \\
We start by recalling  the definition and some properties of the forward rate.
Let $f(t,T,S)$ be the forward rate at time $t$ for the expiry time $T$ and maturity time $S$. In an Arbitrage-free market, the following equality holds
$$
\mathit{e}^{R(t,S)(S-t)}=\mathit{e}^{R(t,T)(T-t)}\mathit{e}^{f(t,T,S)(S-T)},
$$
so that
\begin{equation}\label{eq:ForwardRate}
f(t,T,S):= -\frac{\ln(B(t,S))-\ln(B(t,T))}{S-T}.
\end{equation}

\begin{defn}\label{defn:InstanteousForwardRate}${}$\\
The instantaneous forward rate  (or shortly forward rate) at time $t$ with maturity time $T>t$, $f(t,T)$ is defined by
\begin{equation}\label{eq:InstanteousForwardRate}
f(t,T)=\lim_{S\rightarrow T}f(t,T,S)=-\frac{\partial}{\partial T} \left(\log B(t,T)\right)=-\frac{1}{B(t,T)}\frac{\partial}{\partial T}B(t,T).
\end{equation}
\end{defn}
It corresponds to the instantaneous interest rate that one can contract at time $t$, on a risk-less loan that begins at the date $T$ and is returned on a date later than $T$.\\
By \eqref{eq:InstanteousForwardRate}, we can computed the price of a  uZCB as a functional of the instantaneous forward rate; that is,
\begin{equation}\label{eq:BPwithFR1}B(t,T)=\mathit{e}^{-\int_t^Tf(t,u)du}.\end{equation}

\begin{prop}\label{prop:FormulaTassoForward}${}$\\
Let the spot rate be a nonnegative process $r(t)$. Assume that the process $r(t)$ is integrable and uniformly in bounded intervals, then in order to ensure that this financial market satisfies the no-arbitrage condition, the following condition holds
\begin{equation}\label{eq:gen_forward_rate}f(t,T)=\frac{\mathbb{E}^{\mathbb{Q}}\left[r(T)\mathit{e}^
{-\int_t^Tr(u)du}\bigg{|}\mathcal{F}_t\right]}{\mathbb{E}^{\mathbb{Q}}\left[\mathit{e}^{-\int_t^Tr(u)du}
\bigg{|}\mathcal{F}_t\right]}.\end{equation}
Furthermore the numerator in the previous equation, can be evaluated as
\begin{equation}\label{eq:gen_forward_rateNUM}
\mathbb{E}^{\mathbb{Q}}\left[r(T)\mathit{e}^
{-\int_t^Tr(u)du}\bigg{|}\mathcal{F}_t\right]\, =\,-\,\frac{\partial}{\partial w^+}\,\mathbb{E}^\mathbb{Q}\left[\mathit{e}^{-\int_t^Tr(u)du-wr(T)}\bigg{|}\mathcal{F}_t\right]\bigg|_{w=0}.
\end{equation}

\end{prop}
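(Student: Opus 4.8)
The plan is to deduce both identities from the arbitrage-free bond price formula \eqref{eq:BondPrice}, $B(t,T)=\mathbb{E}^{\mathbb{Q}}\big[e^{-\int_t^Tr(u)\,du}\,\big|\,\mathcal{F}_t\big]$, together with the definition \eqref{eq:InstanteousForwardRate} of the instantaneous forward rate, $f(t,T)=-B(t,T)^{-1}\partial_T B(t,T)$. Formula \eqref{eq:gen_forward_rate} then follows at once, after dividing by $B(t,T)>0$, from the differentiation-under-the-conditional-expectation identity
$$\frac{\partial}{\partial T}\,\mathbb{E}^{\mathbb{Q}}\Big[e^{-\int_t^Tr(u)\,du}\,\Big|\,\mathcal{F}_t\Big]=-\,\mathbb{E}^{\mathbb{Q}}\Big[r(T)\,e^{-\int_t^Tr(u)\,du}\,\Big|\,\mathcal{F}_t\Big],$$
which is the one point that needs justification.

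I would establish this interchange by a pathwise argument followed by conditional dominated convergence. For a.e.\ $\omega$ the map $s\mapsto\int_t^s r(u)\,du$ is absolutely continuous with derivative equal to $r(s)$ at every continuity point of $r$; so, the paths of the fixed delay CIR process being continuous, for a.e.\ $\omega$ the difference quotients $h^{-1}\big(e^{-\int_t^{T+h}r(u)\,du}-e^{-\int_t^{T}r(u)\,du}\big)$ converge as $h\to0$ to $-r(T)\,e^{-\int_t^{T}r(u)\,du}$. For the domination, using $r\geq0$ and $0\leq 1-e^{-x}\leq x$, one has, for $|h|$ small enough,
$$\left|\frac{e^{-\int_t^{T+h}r(u)\,du}-e^{-\int_t^{T}r(u)\,du}}{h}\right|\;\leq\;\frac{1}{|h|}\int_{T\wedge(T+h)}^{T\vee(T+h)}r(u)\,du\;\leq\;\sup_{t\leq u\leq T+1}r(u),$$
and the right-hand side is $\mathbb{Q}$-integrable, this being precisely the standing hypothesis on $r$ (which, for the fixed delay CIR model, is in turn guaranteed by Proposition~\ref{prop:integrabilityCIR} applied under $\mathbb{Q}$, see Remark~\ref{oss:MeasureQ}, yielding \eqref{eq:integr-uniform} with $\mathbb{Q}$ in place of $\mathbb{P}$). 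Conditional dominated convergence then gives the displayed identity, hence \eqref{eq:gen_forward_rate}. (If $r$ is only assumed measurable, the same computation via Lebesgue's differentiation theorem delivers \eqref{eq:gen_forward_rate} for almost every $T$.)

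For \eqref{eq:gen_forward_rateNUM} the argument is entirely parallel, now differentiating in $w$ at $w=0^{+}$. For $w>0$ the quotient $w^{-1}\big(e^{-\int_t^Tr(u)\,du-w r(T)}-e^{-\int_t^Tr(u)\,du}\big)$ converges a.s.\ to $-r(T)\,e^{-\int_t^Tr(u)\,du}$ as $w\downarrow0$, and, using $e^{-\int_t^Tr(u)\,du}\leq1$ and $1-e^{-w r(T)}\leq w\,r(T)$, it is dominated by $r(T)$, which is $\mathbb{Q}$-integrable. Hence, again by conditional dominated convergence,
$$\frac{\partial}{\partial w^{+}}\,\mathbb{E}^{\mathbb{Q}}\Big[e^{-\int_t^Tr(u)\,du-w r(T)}\,\Big|\,\mathcal{F}_t\Big]\bigg|_{w=0}=-\,\mathbb{E}^{\mathbb{Q}}\Big[r(T)\,e^{-\int_t^Tr(u)\,du}\,\Big|\,\mathcal{F}_t\Big],$$
which is \eqref{eq:gen_forward_rateNUM}.

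The only genuine difficulty is the rigorous interchange of the $T$- (respectively $w$-) derivative with the conditional expectation; everything else is routine bookkeeping. The two inputs that make it work are the a.s.\ path-continuity of the fixed delay CIR process, ensuring $\partial_T\int_t^Tr(u)\,du=r(T)$ holds pathwise, and the uniform-in-bounded-intervals integrability of $r$ (equivalently \eqref{eq:integr-uniform}), which supplies the integrable dominating variables $\sup_{t\leq u\leq T+1}r(u)$ and $r(T)$ for the two difference quotients.
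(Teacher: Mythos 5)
Your proposal is correct and follows essentially the same route as the paper: both reduce \eqref{eq:gen_forward_rate} and \eqref{eq:gen_forward_rateNUM} to justifying the interchange of the $T$- (resp.\ right $w$-) derivative with the conditional expectation, using the pathwise limit of the difference quotients together with the bound $0\leq 1-e^{-x}\leq x$ and the dominating variables $\sup_{T\leq u\leq T+1}r(u)$ and $r(T)$, whose integrability is exactly the standing hypothesis. The only cosmetic difference is that the paper phrases the first identity through the increment $B(t,T+h)-B(t,T)$ and equation \eqref{eq:BPwithFR1} rather than through $\partial_T\log B(t,T)$ directly, which changes nothing of substance.
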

\begin{proof}${}$\\
Taking into account that $f(t,T)$ is $\mathcal{F}_t$-measurable, the equality~\eqref{eq:gen_forward_rate} is equivalent to
\begin{equation}\label{eq:FR_equality}
\mathbb{E}^{\mathbb{Q}}\left[f(t,T)\mathit{e}^{-\int_t^Tr(u)du}\bigg{|}\mathcal{F}_t\right]
=\mathbb{E}^{\mathbb{Q}}\left[r(T)\mathit{e}^{-\int_t^Tr(u)du}\bigg{|}\mathcal{F}_t\right].
\end{equation}
Observing that, by  \eqref{eq:BPwithFR1} and \eqref{eq:BondPrice},
\[B(t,T)\,\frac{\mathit{e}^{-\int_T^{T+h}f(t,u)du}-1}{h}=\frac{B(t,T+h)-B(t,T)}{h}
=\mathbb{E}^{\mathbb{Q}}\left[\mathit{e}^{-\int_t^Tr(u)du}\,\frac{\mathit{e}^{-\int_T^{T+h}r(u)du}-1}{h}
\bigg{|}\mathcal{F}_t\right],\]
and letting $h\rightarrow0^+$, the left-hand side converges to $ B(t,T)f(t,T)$, and the right-hand side converges to $\mathbb{E}^{\mathbb{Q}}\left[r(T)\mathit{e}^{-\int_t^Tr(u)du}\bigg{|}\mathcal{F}_t\right]$. The latter limit holds thanks to the observation that, $r(t)$ being nonnegative,
$$
\left|\mathit{e}^{-\int_t^Tr(u)du}\,\frac{\mathit{e}^{-\int_T^{T+h}r(u)du}-1}{h}\right|\leq \sup_{T\leq u \leq T+1} r(u), \quad \text{for all} \, 0\leq h \leq 1,
$$
and the integrability condition on $r(t)$.

Similarly we get
\[\frac{\partial}{\partial w^+}\mathbb{E}^\mathbb{Q}\left[\mathit{e}^{-\int_t^Tr(u)du-wr(T)}\bigg{|}\mathcal{F}_t\right]=-
\mathbb{E}^\mathbb{Q}\left[r(T)\mathit{e}^{-\int_t^Tr(u)du-wr(T)}\bigg{|}\mathcal{F}_t\right],\]
and therefore, taking $w=0$, we get \eqref{eq:gen_forward_rateNUM}.
\end{proof}

\bigskip

To obtain formula \eqref{eq:FRclosedformula}, we need   a representation formula for the numerator of~\eqref{eq:gen_forward_rate}. In this regard, as we have seen in the previous section (see Theorem~\ref{thm:term_structure_one-dimensional_CIR_delay}), if the spot rate $r(t)$  is a   fixed delay CIR process, we can represent
\[\mathbb{E}^\mathbb{Q}\left[\mathit{e}^{-\int_t^Tr(u)du-wr(T)}\bigg{|}\mathcal{F}_t\right]=v^\mathbb{Q}
(t,T,r(t),y^\mathbb{Q}(t,T;w);w),\]
where the function $v^\mathbb{Q}(t,T,r,y;w)$ is defined in \eqref{eq:term_structure_one-dimensional_CIR_delay}.
  Then accordingly to~\eqref{eq:gen_forward_rateNUM} in Proposition~\ref{prop:FormulaTassoForward}, we can represent
\[\mathbb{E}^\mathbb{Q}\left[r(T)\mathit{e}^{-\int_t^Tr(u)du-wr(T)}\bigg{|}\mathcal{F}_t\right]\, =\,-\,\frac{\partial}{\partial w^+}v^\mathbb{Q}
(t,T,r(t),y^\mathbb{Q}(t,T;w);w)
\quad \text{$t\in[t_0,T]$ with $T$ fixed},\]
where $y^\mathbb{Q}(t,T;w)$ is the process defined in~\eqref{eq:def_yQ}.
\\
As we will prove below, the main observation is that the left-hand side of the previous equality can be expressed
as a function $\tilde{v}^\mathbb{Q}(t,T,r,y,\tilde{y};w)$ (see its expression in~\eqref{eq:tilde_vQ}), evaluated in $(r,y,\tilde{y})=(r(t),y^\mathbb{Q}(t,T;w),\tilde{y}^\mathbb{Q}(t,T;w))$,
where $\tilde{y}^\mathbb{Q}(t,T;w)$ is
\begin{align}\notag
\tilde{y}^\mathbb{Q}(t,T;w)&=\frac{\partial}{\partial w^+}y^\mathbb{Q}(t,T;w)=
\int_{t-\tau}^t\frac{\partial}{\partial w^+}\Gamma^\mathbb{Q}(u,T;w)r(u)\mathbf{1}_{[t_0-\tau,t-\tau]}(u)du
\intertext{(thanks to the expression~\eqref{eq:def_GammaQ} of $\Gamma^\mathbb{Q}(t,T;w)$)}
&=\int_{t-\tau}^tb_r \frac{\partial}{\partial w^+}\alpha^\mathbb{Q}_r(u+\tau,T;w)r(u)
\mathbf{1}_{[t_0-\tau,t-\tau]}(u)du,
\label{eq:def_tildeyQ}
\end{align}
The function $\tilde{v}^\mathbb{Q}$ is given by
\begin{equation}\label{eq:tilde_vQ}
\tilde{v}^\mathbb{Q}(t,T,r,y,\tilde{y};w)=\begin{cases}\begin{array}{cc}
\left(\beta^\mathbb{Q}_0(t,T;w)+r\beta^\mathbb{Q}_r(t,T;w)+\tilde{y}\right)\mathit{e}^{-\alpha_0^\mathbb{Q}(t,T;w)
-\alpha_r^\mathbb{Q}(t,T;w)r -y}& \text{$t<T$}\\
(r+\tilde{y})\mathit{e}^{-wr-y}&\text{$t=T$},\end{array}\end{cases}
\end{equation}
where we have set
\begin{align}
\beta^\mathbb{Q}_0(t,T;w)=\frac{\partial}{\partial w^+}\alpha^\mathbb{Q}_0(t,T;w),
\qquad
\beta^\mathbb{Q}_r(t,T;w)=\frac{\partial}{\partial w^+}\alpha^\mathbb{Q}_r(t,T;w).\label{eq:beta_alpha}\end{align}
Indeed, for $t<T$
\begin{align*}
&\tilde{v}^\mathbb{Q}(t,T,r(t),y^\mathbb{Q}(t,T;w),\tilde{y}^\mathbb{Q}(t,T;w);w)=
-\tfrac{\partial}{\partial w^+}v^\mathbb{Q}(t,T,r(t),y^\mathbb{Q}(t,T;w);w)\\
&=v^\mathbb{Q}(t,T,r(t),y^\mathbb{Q}(t,T;w);w)\left(\tfrac{\partial}{\partial w^+}\alpha^\mathbb{Q}_0(t,T;w)+r(t)\tfrac{\partial}{\partial w^+}\alpha^\mathbb{Q}(t,T;w)+
\tfrac{\partial}{\partial w^+}y^\mathbb{Q}(t,T;w)\right)\\
&=v^\mathbb{Q}(t,T,r(t),y^\mathbb{Q}(t,T;w);w)\left(\beta^\mathbb{Q}_0(t,T;w)+r(t)\beta^\mathbb{Q}(t,T;w)
+\tilde{y}^\mathbb{Q}(t,T;w)\right),
\end{align*} while for $t=T$
\begin{align*}
\tilde{v}^\mathbb{Q}(T,T,r(T),y^\mathbb{Q}(T,T;w),\tilde{y}^\mathbb{Q}(T,T;w);w)=
&-\tfrac{\partial}{\partial w^+}v^\mathbb{Q}(T,T,r(T),y^\mathbb{Q}(T,T;w);w)\\
=&v^\mathbb{Q}(T,T,r(T),y^\mathbb{Q}(T,T;w);w)(r(T)+\tilde{y}^\mathbb{Q}(T,T;w)).
\end{align*}
Observe that,
since $y^\mathbb{Q}(T,T;w)=0$ and $\tilde{y}^\mathbb{Q}(T,T;w)=0$ for all $w$,  the latter formula coincides with $v^\mathbb{Q}(T,T,r(T),0;w)r(T)$, from which one can reobtain the obvious identity $f(T,T)=r(T)$. \\

With the following theorem, we characterize the functions $\beta_0^\mathbb{Q}(t,T;w)$ and $\beta_r^\mathbb{Q}(t,T;w)$ as the solutions of a system of linear differential equations.
\begin{thm}\label{thm:term_structure_derivate_delay-one-dim}${}$\\
Let the risk-free interest rate $r(t)$ be the process described by \eqref{eq:IRQequation}, under the probability measure~$\mathbb{Q}$. Let $\alpha_0^\mathbb{Q}(t,T;w)$, $\alpha_r^\mathbb{Q}(t,T;w)$ be the continuous solution of system  \eqref{eq:term_structure_diff_system_delay-BP}-\eqref{eq:boundary_con_system_delay}. Assume that the deterministic function $\Gamma^\mathbb{Q}(t,T;w)$ is chosen as  in \eqref{eq:def_GammaQ}.
Then, under Assumptions~\ref{asss:hp_r}, \ref{ass:integrabilityUnidimensional} and \ref{ass:MarketPriceOfRisk-OneDim}, we have that, for all $w \in \left[0,\frac{\sqrt{(a_r^\mathbb{Q})^2 +2\sigma_r^2}-a_r^\mathbb{Q}}{\sigma_r^2}\right)$,
\begin{enumerate}
\item the following linear differential system
\begin{equation}\label{eq:term_struction_derivate_system_delay-one-dim}\begin{cases}
\frac{d}{dt}\beta_r(t)=&\left(\sigma_r^2\alpha_r^\mathbb{Q}(t,T;w)+a^\mathbb{Q}_r\right)
\beta_r(t)
\phantom{xxxxxxxxxxxxx}\text{for $T-\tau\leq t\leq T$}\\
\vspace{2mm}\\
\frac{d}{dt}\beta_r(t)=&\left(\sigma_r^2\alpha^\mathbb{Q}_r(t,T;w)+a^\mathbb{Q}_r\right)
\beta_r(t)-b_r\beta_r(t+\tau)\phantom{xxx}\text{for $t_0\leq t\leq T-\tau$}\\
\vspace{2mm}\\
\frac{d}{dt}\beta_0(t)=&-a^\mathbb{Q}_r\gamma^\mathbb{Q}_r(t)\beta_r(t)
\phantom{xxxxxxxxxxxxxxxxxxxxxx}\text{for $t_0\leq t\leq  T$}
\end{cases}
\end{equation}
with the boundary conditions
\begin{equation}\label{eq:BC_derivative_system}\begin{cases}
\beta_r(T)&=1,\\
\beta_r(T-\tau)&=\beta_r((T-\tau)^+),\\
\beta_0(T)&=0,
\end{cases}\end{equation}
has a unique solution with components $\beta^\mathbb{Q}_r(t,T;w)$ and $\beta^\mathbb{Q}_0(t,T;w)$, coinciding  with the functions defined in~\eqref{eq:beta_alpha};
\item the functions $\beta^\mathbb{Q}_r(t,T;w)$ and $\beta^\mathbb{Q}_0(t,T;w)$ are continuous and positive;
\item the following representation formula holds:
\begin{equation}\label{eq:num_FRw}\begin{split}
&\mathbb{E}^\mathbb{Q}\left[r(T)\mathit{e}^{-\int_t^Tr(u)du-wr(T)}\left|\mathcal{F}_t\right.\right]=
\tilde{v}^\mathbb{Q}(t,T,r(t),y^\mathbb{Q}(t,T;w), \tilde{y}^\mathbb{Q}(t,T;w);w)
\\&=
\left(\beta^\mathbb{Q}_0(t,T;w)+r(t)\beta^\mathbb{Q}_r(t,T;w)+\tilde{y}^\mathbb{Q}(t,T;w)\right)
\mathit{e}^{-\alpha_0^\mathbb{Q}(t,T;w)-\alpha_r^\mathbb{Q}(t,T;w)r(t)-y^\mathbb{Q}(t,T;w)},
\end{split}
\end{equation}
where, for $t_0\leq t\leq T$, $y^\mathbb{Q}(t,T;w)$ is given in \eqref{eq:def_yQ_w-BP}, and
\begin{align}
\tilde{y}^\mathbb{Q}(t,T;w)=\int_{t-\tau}^tb_r\beta_r^\mathbb{Q}(u+\tau,T;w)r(u)
\mathbf{1}_{[t_0-\tau,t-\tau]}(u)du.\label{eq:def_tildeyQ_BP}
\end{align}

\end{enumerate}
\end{thm}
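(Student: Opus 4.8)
The plan is to \emph{differentiate} the system \eqref{eq:term_structure_diff_system_delay-BP}--\eqref{eq:boundary_con_system_delay} of Theorem~\ref{thm:term_structure_one-dimensional_CIR_delay} with respect to the parameter $w$ (from the right), check that this produces exactly \eqref{eq:term_struction_derivate_system_delay-one-dim}--\eqref{eq:BC_derivative_system}, and then obtain the representation \eqref{eq:num_FRw} from formula \eqref{eq:gen_forward_rateNUM} of Proposition~\ref{prop:FormulaTassoForward} via the chain rule. First I would dispose of existence, uniqueness, continuity and positivity for the linear system. Since $\alpha_r^\mathbb{Q}(\cdot,T;w)$ is the continuous function provided by Theorem~\ref{thm:term_structure_one-dimensional_CIR_delay}, the system \eqref{eq:term_struction_derivate_system_delay-one-dim}--\eqref{eq:BC_derivative_system} is \emph{linear} in $(\beta_r,\beta_0)$ and is solved by the method of steps on the intervals $I_k:=[\,t_0\vee(T-(k+1)\tau),\,T-k\tau\,]$, finitely many of which cover $[t_0,T]$: on $I_0=[T-\tau,T]$ the homogeneous linear equation with $\beta_r(T)=1$ gives $\beta_r(t)=\exp\!\big(-\int_t^{T}(\sigma_r^2\alpha_r^\mathbb{Q}(u,T;w)+a_r^\mathbb{Q})\,du\big)>0$; on each $I_k$, $k\ge1$, the term $b_r\beta_r(t+\tau)$ is already known, continuous and $\ge0$, and the variation-of-constants formula read backward from the (positive) value at the right endpoint of $I_k$ yields a unique continuous, strictly positive $\beta_r$; finally $\beta_0(t):=a_r^\mathbb{Q}\int_t^{T}\gamma_r^\mathbb{Q}(u)\beta_r(u)\,du$ solves the third equation with $\beta_0(T)=0$ and is continuous and positive because $a_r^\mathbb{Q}>0$, $\gamma_r^\mathbb{Q}>0$, $\beta_r>0$. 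This establishes items~1 and~2 once $(\beta_r,\beta_0)$ is identified with \eqref{eq:beta_alpha}.

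The core step is to show that $\alpha_r^\mathbb{Q}$ and $\alpha_0^\mathbb{Q}$ are right differentiable in $w$ jointly with $t$ and that $\tfrac{\partial}{\partial w^+}$ commutes with $\tfrac{d}{dt}$, so that the functions $\beta_r^\mathbb{Q}=\tfrac{\partial}{\partial w^+}\alpha_r^\mathbb{Q}$, $\beta_0^\mathbb{Q}=\tfrac{\partial}{\partial w^+}\alpha_0^\mathbb{Q}$ of \eqref{eq:beta_alpha} actually satisfy \eqref{eq:term_struction_derivate_system_delay-one-dim}--\eqref{eq:BC_derivative_system}. On $I_0$ this is the classical theorem on differentiable dependence of the solution of the Riccati ODE $\tfrac12\sigma_r^2\alpha^2+a_r^\mathbb{Q}\alpha-1$ on its terminal datum $\alpha_r(T)=w$; on the range of $w$ in the statement the solution does not explode on $[T-\tau,T]$ (this is exactly what Lemma~\ref{lem:LemmaTecnico} provides), hence $w\mapsto\alpha_r^\mathbb{Q}(\cdot,T;w)$ is $C^1$ there and its $w$-derivative solves the variational equation $\tfrac{d}{dt}\beta_r=(\sigma_r^2\alpha_r^\mathbb{Q}+a_r^\mathbb{Q})\beta_r$, $\beta_r(T)=1$. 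Inductively on $I_k$, $k\ge1$, I would write $\alpha_r^\mathbb{Q}(t,T;w)$ through the integral form of the second equation of \eqref{eq:term_structure_diff_system_delay-BP} with the already $w$-differentiable datum $\alpha_r^\mathbb{Q}(\cdot+\tau,T;w)$ from $I_{k-1}$, and differentiate under the integral sign — legitimate because, on the compact $I_{k-1}$, both $\alpha_r^\mathbb{Q}$ and $\tfrac{\partial}{\partial w^+}\alpha_r^\mathbb{Q}(\cdot+\tau,T;w)$ are continuous, hence bounded uniformly in $t$ — obtaining that $\tfrac{\partial}{\partial w^+}\alpha_r^\mathbb{Q}$ exists, is continuous, and satisfies the second equation of \eqref{eq:term_struction_derivate_system_delay-one-dim}; differentiating the matching condition and $\alpha_r(T)=w$ gives \eqref{eq:BC_derivative_system}, and differentiating $\alpha_0^\mathbb{Q}(t,T;w)=a_r^\mathbb{Q}\int_t^{T}\gamma_r^\mathbb{Q}(u)\alpha_r^\mathbb{Q}(u,T;w)\,du$ under the integral sign gives the third equation. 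By the uniqueness established in the first step, these $\beta_r^\mathbb{Q},\beta_0^\mathbb{Q}$ coincide with the solution found there, proving item~1. \emph{This rigorous justification of one-sided differentiation of the delay system, propagated across the chain of subintervals, is the main obstacle;} everything else is routine.

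It remains to derive the representation \eqref{eq:num_FRw}, which is essentially the computation carried out just before the statement. By Remark~\ref{oss:MeasureQ} and Assumption~\ref{ass:integrabilityUnidimensional}, under $\mathbb{Q}$ the process $r(t)$ is a nonnegative fixed delay CIR process integrable uniformly on bounded intervals (Proposition~\ref{prop:integrabilityCIR} applied with $\mathbb{Q}$ in place of $\mathbb{P}$), so the argument in the proof of Proposition~\ref{prop:FormulaTassoForward} gives, for every admissible $w$, $\mathbb{E}^\mathbb{Q}[\,r(T)\mathit{e}^{-\int_t^Tr(u)du-wr(T)}\mid\mathcal{F}_t\,]=-\tfrac{\partial}{\partial w^+}\mathbb{E}^\mathbb{Q}[\,\mathit{e}^{-\int_t^Tr(u)du-wr(T)}\mid\mathcal{F}_t\,]$. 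By Theorem~\ref{thm:term_structure_one-dimensional_CIR_delay} the last conditional expectation equals $v^\mathbb{Q}(t,T,r(t),y^\mathbb{Q}(t,T;w);w)=\mathit{e}^{-\alpha_0^\mathbb{Q}(t,T;w)-\alpha_r^\mathbb{Q}(t,T;w)r(t)-y^\mathbb{Q}(t,T;w)}$; applying the chain rule and using the second step for $\tfrac{\partial}{\partial w^+}\alpha_0^\mathbb{Q}=\beta_0^\mathbb{Q}$ and $\tfrac{\partial}{\partial w^+}\alpha_r^\mathbb{Q}=\beta_r^\mathbb{Q}$, together with differentiation of \eqref{eq:def_yQ_w-BP} under the integral sign (licit since $u\mapsto b_r\beta_r^\mathbb{Q}(u+\tau,T;w)r(u)$ is integrable on $[t-\tau,t]$, $\beta_r^\mathbb{Q}$ being bounded on $[t_0,T]$ and $r$ integrable) to identify $\tfrac{\partial}{\partial w^+}y^\mathbb{Q}(t,T;w)=\tilde y^\mathbb{Q}(t,T;w)$ as in \eqref{eq:def_tildeyQ_BP}, one arrives exactly at \eqref{eq:num_FRw} (and at $t=T$ at the obvious identity $f(T,T)=r(T)$), which is item~3.
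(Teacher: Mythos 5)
Your proposal is correct and follows essentially the same route as the paper: right-differentiate the system \eqref{eq:term_structure_diff_system_delay-BP}--\eqref{eq:boundary_con_system_delay} in $w$ to obtain the linear (delay) variational system, solve it backward by steps with explicit exponential/variation-of-constants formulas that make positivity evident, and deduce item~3 from Proposition~\ref{prop:FormulaTassoForward} together with the chain rule applied to $v^\mathbb{Q}$. You are somewhat more explicit than the paper in justifying the interchange of $\tfrac{\partial}{\partial w^+}$ with $\tfrac{d}{dt}$ across the subintervals (the paper delegates this to standard parameter-dependence results via Remark~\ref{rem:LemmaTecnico}) and in spelling out item~3, which the paper treats as immediate.
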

\begin{oss}\label{rem:linear-repr-FR}
The announced linear representation~\eqref{eq:FRclosedformula} of the instantaneous forward rate   now can be easily derived.
Indeed, under the assumptions of Theorem~\ref{thm:term_structure_derivate_delay-one-dim},  \eqref{eq:FRclosedformula} follows by the definition~\eqref{eq:gen_forward_rate} of the instantaneous forward rate, together with \eqref{eq:num_FRw},
\eqref{eq:term_str}, and \eqref{eq:term_structure_one-dimensional_CIR_delay}, all  evaluated in $w=0$.

Furthermore, as a direct consequence of~\eqref{eq:BPwithFR1},
we can also represent the zero-coupon bond price with the following relation
\begin{equation}\label{eq:BPwithFR2}
B(t,T)=\mathit{e}^{-\int_t^T\left[\beta^\mathbb{Q}_0(t,u;0)+r(t)\beta^\mathbb{Q}_r(t,u;0)+
\tilde{y}^\mathbb{Q}(t,u;0)\right]du}.
\end{equation}

\end{oss}

\begin{proof}[Proof of Theorem~\ref{thm:term_structure_derivate_delay-one-dim}]
${}$\\
We prove only points $1.$ and $2.$ since thanks to \eqref{eq:tilde_vQ} and \eqref{eq:beta_alpha},
the point $3.$ immediately follows.\\
Right-differentiating with respect to the variable $w$, the first equation of system \eqref{eq:term_structure_diff_system_delay-BP}, we obtain for $T-\tau\leq t\leq T$
\begin{align*}\tfrac{\partial}{\partial w^+}\left(\tfrac{\partial}{\partial t}\alpha^\mathbb{Q}_r(t,T;w)\right)&=\sigma_r^2\alpha^\mathbb{Q}_r(t,T;w)\tfrac{\partial }{\partial w^+}\alpha^\mathbb{Q}_r(t,T;w)+a^\mathbb{Q}_r\tfrac{\partial}{\partial w^+} \alpha^\mathbb{Q}_r(t,T;w)\\&=\left(\sigma_r^2\,\alpha^\mathbb{Q}_r(t,T;w)+a^\mathbb{Q}_r\right)
\tfrac{\partial }{\partial w^+}\alpha^\mathbb{Q}_r(t,T;w).\end{align*}
Then, formally, by the first definition in \eqref{eq:beta_alpha}, we have
\[\tfrac{\partial }{\partial t}\beta^\mathbb{Q}_r(t,T;w)=\left(\sigma_r^2\,\alpha^\mathbb{Q}_r(t,T;w)+a^\mathbb{Q}_r\right)
\beta^\mathbb{Q}_r(t,T;w).\]
A rigorous proof of the above equation can be achieved   by standard results on ordinary differential equations, depending on a parameter, under global Lipschitz conditions, thanks to Remark~\ref{rem:LemmaTecnico} (see Appendix~\ref{app:ProofsCIR}).
Solving this equation with the boundary condition
\[\beta^\mathbb{Q}_r(T,T;w)=\tfrac{\partial }{\partial w^+}\alpha^\mathbb{Q}_r(T,T;w)=1,\]
we obtain that the unique solution is given by
\begin{equation}\label{eq:betaQ_r1}
\beta^\mathbb{Q}_r(t,T;w)=\mathit{e}^{-\int_t^T\left(\sigma_r^2\,\alpha^\mathbb{Q}_r(u,T;w)+
a^\mathbb{Q}_r\right)du},\quad\text{for $T-\tau\leq t \leq T$},
\end{equation}
which is positive.
Similarly we get
\begin{equation}\label{eq:betaQ_r2_edo}
\tfrac{\partial }{\partial t}\beta^\mathbb{Q}_r(t,T-\tau;w)=\left(\sigma_r^2\alpha^\mathbb{Q}_r(t,T;w)+a_r\right)\beta^\mathbb{Q}_r(t,T-\tau;w)
-b_r\beta^\mathbb{Q}_r(t+\tau,T;w)\quad\text{for $t_0\leq t\leq T-\tau$},
\end{equation}
 with the boundary condition given by the solution of \eqref{eq:betaQ_r1}, evaluated in $t=T-\tau$;
  the unique solution is given by
\begin{equation}\label{eq:betaQ_r2-}\begin{split}
\beta^\mathbb{Q}_r(t,T-\tau;w)=\beta^\mathbb{Q}_r(t,T;w)+\,b_r\int_t^{T-\tau}\mathit{e}^{-\int_t^s
\left(\sigma^2_r\alpha^\mathbb{Q}_r(u,T;w)+a^\mathbb{Q}_r\right)du}\beta^\mathbb{Q}_r(s+\tau,T;w)ds,
\end{split}\end{equation}
and is positive.
The same procedure applies to  the third equation of the system \eqref{eq:term_structure_diff_system_delay-BP}, and recalling the definition~\eqref{eq:beta_alpha}, of $\beta^\mathbb{Q}_0(t,T;w)$,
we obtain the equation
\begin{align}\label{eq:betaQ_0_edo}
 \frac{\partial }{\partial t}\beta^\mathbb{Q}_0(t,T;w)&=-a^\mathbb{Q}_r\gamma^\mathbb{Q}_r(t)\beta^\mathbb{Q}_r(t,T;w)\quad\text{for $t_0\leq t\leq T$},
 \intertext{with boundary condition}
\beta^\mathbb{Q}_0(T,T;w)&=\tfrac{\partial }{\partial w^+}\alpha^\mathbb{Q}_0(T,T;w)=0.\notag
 \end{align}
The unique solution of Eq.~\eqref{eq:betaQ_0_edo} is positive and is given by
 \begin{equation}\label{eq:betaQ_0}
\beta^\mathbb{Q}_0(t,T;w)=a^\mathbb{Q}_r\int_t^T\gamma^\mathbb{Q}_r(u)\beta^\mathbb{Q}_r(u,T;w)du.
\end{equation}

\end{proof}

\appendix
\section{Appendix}\label{app:ProofsCIR}

\begin{proof}[\large{\textbf{\emph{Proof of Theorem \ref{thm:existenceQ} (see page \pageref{thm:existenceQ})}}}] ${}$\\
Since the $\mathbb{P}$-parameters satisfy the Feller condition, the solution the process $r(t)$ is positive for all $t>t_0$; consequently, the process $\xi_r(t,r(\cdot))$ in \eqref{eq:xi-r-t-r}
is a well-defined continuous process. Therefore, we can define the nonnegative supermartingale given by
$Z_t=1$, when $t\leq t_0$, and
$$
  Z_t  :=\exp\left\{ -\int_{t_0}^t \xi_r(s,r(\cdot)) \, dW^\mathbb{P}(s)-  \frac{1}{2}\,\int_{t_0}^t \xi^2_r(s,r(\cdot))\, ds\right\},\quad t\in[t_0,T].
$$

If $Z_t$ is a $\mathbb{P}$-martingale then, as usual, one can define the probability measure $\mathbb{Q}$ on $\mathcal{F}_T$, so that
  $$
  d \mathbb{Q} = Z_T \, d \mathbb{P}.
  $$
By Girsanov theorem
\begin{equation}\label{eq:W-Q}
W_r^\mathbb{Q}(t):= W^\mathbb{P}(t)+\int_{t_0}^t \xi_r(s,r(\cdot))\, ds, \quad t\in [t_0,T],
\end{equation}
is a Brownian motion, under $\mathbb{Q}$ and
  $$
  r(t)=r_0(t), \; t\in [t_0-\tau, t_0],\quad  r(t)= r_0(t_0)+ \int_{t_0}^t \mu^\mathbb{Q}(s,r(\cdot))ds+ \int_{t_0}^t \sigma_r
  \sqrt{r(s)} \, dW_r^\mathbb{Q}(s), \; t\in [t_0,T],
  $$
and the thesis is achieved.

The process $Z_t$ is a martingale if and only if
\begin{equation}\label{eq:E[Z-t]=1}
\mathbb{E}^{\mathbb{P}}[Z_T]=1.
\end{equation}

In order to prove~\eqref{eq:E[Z-t]=1},  we define the process $\widetilde{r}(t)$ on the probability space $(\Omega, \mathcal{F}, \{\mathcal{F}_t\}, \mathbb{P})$, as the strong solution of the following SDDE
  \begin{equation}\label{eq:r-tilde}
  \begin{split}
   \widetilde{r}(t)&=r_0(t_0)+ \int_{t_0}^t \mu^\mathbb{Q}(s,\widetilde{r}(s))ds+ \int_{t_0}^t \sigma_r
  \sqrt{\widetilde{r}(s)} \, dW_r^\mathbb{P}(s), \; t\in [t_0,T],
  \\
  \widetilde{r}(t)&=r_0(t), \; t\in [t_0-\tau, t_0],
 \end{split}
 \end{equation}
  By hypotheses, the distribution of $\widetilde{r}(t)$ is unique (see Remark~\ref{oss:UiL}). Since the $\mathbb{Q}$-parameters also satisfy
the Feller condition,  the following  process
  $$
  \xi_r(t,  \widetilde{r}(\cdot)):= \frac{\mu^\mathbb{P}(t,  \widetilde{r}(\cdot))-\mu^\mathbb{Q}(t,  \widetilde{r}(\cdot))}{\sigma_r\sqrt{  \widetilde{r} (t)}}= \frac{a_r\gamma_r(t) - a_r^\mathbb{Q}\gamma^\mathbb{Q}(t) -(a_r- a_r^\mathbb{Q})   \widetilde{r}(t) + (b_r-b_r^\mathbb{Q})  \widetilde{r} (t-\tau)}{\sigma_r\sqrt{  \widetilde{r} (t)}}
  $$
  is well defined and with continuous paths, as well as the process $\xi_r(t,  r(\cdot))$. Therefore, if we denote by
  $\tau_n$ and $\widetilde{\tau}_n$  the stopping times
  $$
  \tau_n(\omega)=\inf\{t>t_0:\; |\xi_r(t,r(\cdot))|\geq n \}\wedge T, \qquad \widetilde{\tau}_n(\omega)=\inf\{t>t_0:\; |\xi_r(t,\widetilde{r}(\cdot))|\geq n \}\wedge T,
  $$
  then we get
  $$
  \lim_{n\rightarrow \infty}\mathbb{P}(\tau_n=T)=\lim_{n\rightarrow \infty}\mathbb{P}(\widetilde{\tau}_n=T)=1.
  $$
  For any $n\geq 1$, we can define
  $$
  \xi_r^{(n)}(t):=\xi_r(t,r(\cdot)) \,\mathbf{1}_{\{t\leq \tau_n\}}
  $$
  so that
  $$
  \int_{t_0}^t |\xi_r^{(n)}(s)|^2\, ds\leq n^2 (t-t_0).
  $$
For each $n$, the process satisfies the Novikov condition
  $$
  \mathbb{E}^{\mathbb{P}}\left[ \exp\left\{\frac{1}{2}\,\int_{t_0}^T |\xi_r^{(n)}(s)|^2\, ds\right\}\right]\leq \exp\{\tfrac{n^2 (T-t_0)}{2}\} <\infty,
  $$
and, it follows that, for each $n\geq1$, the process defined by
 $$
  Z^{(n)}_t  :=\exp\left\{ -\int_{t_0}^t \xi_r^{(n)}(s) \, dW_r^\mathbb{P}(s)-  \frac{1}{2}\,\int_{t_0}^t |\xi_r^{(n)}(s)|^2\, ds\right\}
  $$
   is a $\mathbb{P}$-martingale such that $ \mathbb{E}^\mathbb{P}[Z^{(n)}_t]=1$ for all $t\leq T$. Consequently, we can define a probability measure  $\mathbb{Q}^{(n)}$, on $\mathcal{F}_T$ as follows
  $$
  d \mathbb{Q}^{(n)} = Z^{(n)}_T \, d \mathbb{P}.
  $$
Since $\mathbb{P}(\tau_n=T)\rightarrow 1$,
and the sequence $\tau_n $ is monotone increasing, we have that
$$
Z^{(n)}_T \,\mathbf{1}_{\{ \tau_n=T\}} = Z_T \,\mathbf{1}_{\{ \tau_n=T\}} \nearrow Z_T, \quad  \mathbb{P}-a.s.
$$
By Beppo-Levi's monotone convergence theorem  and the definition of $\mathbb{Q}^{(n)}$, we have that
\begin{equation}\label{eq:Q-n-tau=T}
\mathbb{E}^{\mathbb{P}}[Z_T]=\lim_{n \rightarrow \infty} \mathbb{E}^{\mathbb{P}}[Z^{(n)}_T\,\mathbf{1}_{\{ \tau_n=T\}}]
= \lim_{n \rightarrow \infty}\mathbb{E}^{\mathbb{Q}^{(n)}}[\,\mathbf{1}_{\{ \tau_n=T\}}]= \lim_{n \rightarrow \infty}\mathbb{Q}^{(n)} (\tau_n=T).
\end{equation}
 Moreover, by Girsanov theorem
  $$
  W_r^{(n)}(t):= W_r^\mathbb{P}(t)+\int_{t_0}^t \xi_r^{(n)}(s)\, ds, \quad t\in [t_0,T]
  $$
  is a Browniam motion under the measure $\mathbb{Q}^{(n)}$ and
  $$
 r(t)=r_0(t_0)+ \int_{t_0}^t \mu^{(n)}(s,r(\cdot))ds+ \int_{t_0}^t \sigma_r
  \sqrt{r(s)} \, dW_r^{(n)}(s), \quad t\in [t_0,T],
  $$
with
  \begin{align*}
  \mu^{(n)}(s,r(\cdot))&= \mu^\mathbb{P}(s,r(\cdot)) - \sigma_r\,\sqrt{r(s)} \, \xi_r^{(n)}(s) = \mu^\mathbb{P}(s,r(\cdot)) - (\mu^\mathbb{P}(s,r(\cdot))-\mu^\mathbb{Q}(s,r(\cdot)))\, \mathbf{1}_{\{s\leq \tau_n\}}
  \\&= \mu^\mathbb{P}(s,r(\cdot)) \, \mathbf{1}_{\{s> \tau_n\}}
  +\mu^\mathbb{Q}(s,r(\cdot)) \, \mathbf{1}_{\{s\leq \tau_n\}}.
\end{align*}
and therefore
\begin{align*}
 r({t\wedge \tau_n})&=r_0(t_0)+ \int_{t_0}^{t\wedge \tau_n} \mu^{(n)}(s,r(\cdot))ds+ \int_{t_0}^{t\wedge \tau_n} \sigma_r
  \sqrt{r(s)} \, dW_r^{(n)}(s), \quad t\in [t_0,T]\\
&=r_0(t_0)+ \int_{t_0}^{t\wedge \tau_n}\mu^\mathbb{Q}(s,r(\cdot)) ds+ \int_{t_0}^{t\wedge \tau_n} \sigma_r
  \sqrt{r(s)} \, dW_r^{(n)}(s), \quad t\in [t_0,T].
\end{align*}
Consequently, by the already observed  weak uniqueness for the SDDE~\eqref{eq:r-tilde}, the joint probability laws of $\big(\{r(t\wedge\tau_n)\}_{t\in [t_0,T]} ,\tau_n\big)$ under $\mathbb{Q}^{(n)}$ and $\big(\{\widetilde{r}(t\wedge\widetilde{\tau}_n)\}_{t\in [t_0,T]},\widetilde{\tau}_n)$ under $\mathbb{P}$ are equal. Then by~\eqref{eq:Q-n-tau=T},
$$
\mathbb{E}^{\mathbb{P}}[Z_T]
=\lim_{n \rightarrow \infty} \mathbb{Q}^{(n)} (\tau_n=T)= \lim_{n \rightarrow \infty}
\mathbb{P} (\widetilde{\tau}_n=T)=1.
$$
Hence $\mathbb{Q}$ is equivalent to $\mathbb{P}$.
\end{proof}

\begin{lem}\label{lem:LemmaRiccati}${}$\\
Consider the following Riccati equation
\begin{equation}\label{eq:RiccatiEq}\begin{cases}
&\varphi^\prime(t)=\frac{1}{2}\sigma^2
\varphi^2(t)+a\varphi(t)-(1+bg(t)),\quad t\in (-\infty, T)\\
&\varphi(T)=\psi,
\end{cases}
\end{equation}
where the parameter $a$, $b$ and $\sigma$ are positive constants, $g(t)$ is a continuous and nonnegative function, and $\psi \geq 0$.\\
Then, in $(-\infty, T]$,  Eq.~\eqref{eq:RiccatiEq} has a unique solution
$$
\varphi(t)=\Phi(t,T;w,g(\cdot)),
$$
which is positive in $(-\infty, T)$.
Moreover if the function $g(t)=\gamma$ is a nonnegative constant, then the solution is
$ \varphi(t)=\Phi(t,T;\psi, \gamma)$, where
\begin{align}\label{eq:sol-riccati-gamma}
\Phi(t,T;\psi, \gamma)&:=\frac{
\frac{k(\gamma)-a}{\sigma^2}\left(\psi+\frac{a+k(\gamma)}{\sigma^2}\right)+\frac{a+k(\gamma)}{\sigma^2}
\left(\psi+\frac{a-k(\gamma)}{\sigma^2}\right)
\mathit{e}^{-k(\gamma)(T-t)}}
{\left(\psi+\frac{a+k(\gamma)}{\sigma^2}\right) + \left(\frac{k(\gamma)-a}{\sigma^2}-\psi\right)\mathit{e}^{-k(\gamma)(T-t)}}, \quad t\in (-\infty, T],
\end{align}
with
\begin{align}\label{ineq:IPOTESI-2-BOH}
 k(x)=\sqrt{a^2+2(1+bx)\sigma^2},\qquad x\geq 0.
\end{align}
\end{lem}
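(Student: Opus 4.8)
The plan is to regard \eqref{eq:RiccatiEq} as a scalar ordinary differential equation solved \emph{backward} from the terminal time $T$, and to establish in turn local well-posedness, positivity of the solution, global existence on $(-\infty,T]$, and finally the closed form in the autonomous case. Write $F(t,\varphi):=\tfrac12\sigma^2\varphi^2+a\varphi-(1+bg(t))$. Since $g$ is continuous, $F$ is continuous in $(t,\varphi)$ and a polynomial, hence locally Lipschitz, in $\varphi$, uniformly on compact $t$-intervals, so the Picard--Lindel\"of theorem yields a unique maximal solution $\varphi$ on an interval $(t^\ast,T]$ with $t^\ast\in[-\infty,T)$. For positivity, note that $b>0$ and $g\ge 0$ give $1+bg(t)\ge 1>0$, hence $F(t,0)=-(1+bg(t))<0$ for every $t$. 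If $\psi>0$ then $\varphi>0$ on a left neighbourhood of $T$ by continuity, while if $\psi=0$ then $\varphi'(T)=F(T,0)<0$, so again $\varphi(t)>0$ for $t$ slightly below $T$. If $\varphi$ vanished somewhere in $(t^\ast,T)$, let $t_1$ be the largest such point; then $\varphi(t_1)=0$ and $\varphi>0$ on $(t_1,T)$, which forces $\varphi'(t_1)\ge 0$ and contradicts $\varphi'(t_1)=F(t_1,0)<0$. Thus $\varphi>0$ on all of $(t^\ast,T)$.

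It remains to prove $t^\ast=-\infty$; this no-blow-up step is the point that needs a little care, and it is easy precisely because of the sign of the leading term. Setting $s:=T-t$ and $\phi(s):=\varphi(T-s)$ turns the equation into $\phi'(s)=-\tfrac12\sigma^2\phi(s)^2-a\phi(s)+\bigl(1+bg(T-s)\bigr)$. On any bounded interval $[0,S]$ the positivity just shown gives $\phi>0$, so $-\tfrac12\sigma^2\phi^2-a\phi\le 0$ there and hence $\phi'(s)\le 1+b\,\max_{[0,S]}g(T-\cdot)$, whence $0<\phi(s)\le\phi(0)+\int_0^s\bigl(1+bg(T-u)\bigr)\,du$ remains bounded on $[0,S]$. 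A solution confined to a compact set on every bounded interval cannot explode, so $\phi$, equivalently $\varphi$, extends to all of $(-\infty,T]$, and it is the unique solution there by Picard--Lindel\"of. The moral is that, integrated backward, the dominant quadratic term $-\tfrac12\sigma^2\phi^2$ drives $\phi$ downward rather than toward $+\infty$, so no finite-time explosion occurs, while the lower barrier $0$ is exactly the positivity proved above.

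Finally, when $g\equiv\gamma$ is constant the equation is autonomous and I would integrate it by separation of variables. Factor $F(\cdot,\varphi)=\tfrac12\sigma^2(\varphi-x_+)(\varphi-x_-)$ with
\[
x_\pm=\frac{-a\pm k(\gamma)}{\sigma^2},\qquad k(\gamma)=\sqrt{a^2+2(1+b\gamma)\sigma^2},
\]
the two real roots of the quadratic, noting $x_+>0>x_-$ and $x_+-x_-=2k(\gamma)/\sigma^2$. A partial-fraction decomposition turns the equation into $\dfrac{d}{dt}\log\Bigl|\dfrac{\varphi-x_+}{\varphi-x_-}\Bigr|=k(\gamma)$; integrating and imposing $\varphi(T)=\psi$ gives
\[
\frac{\varphi(t)-x_+}{\varphi(t)-x_-}=\frac{\psi-x_+}{\psi-x_-}\,e^{-k(\gamma)(T-t)},
\]
and solving this linear relation for $\varphi(t)$, then re-expressing $x_\pm$ through $a$, $k(\gamma)$ and $\sigma$, yields precisely \eqref{eq:sol-riccati-gamma}. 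A direct check closes the argument: the right-hand side of \eqref{eq:sol-riccati-gamma} equals $\psi$ at $t=T$, tends to $x_+>0$ as $t\to-\infty$, and its denominator equals $(\psi-x_-)-(\psi-x_+)e^{-k(\gamma)(T-t)}\ge x_+-x_->0$ for every $t\le T$, so the formula is well defined and positive on $(-\infty,T)$, consistently with the general part of the statement.
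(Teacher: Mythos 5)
Your proof is correct, but it takes a genuinely different route from the paper's. The paper reduces the Riccati equation to a second-order \emph{linear} ODE via the substitution $z(s)=e^{\frac{\sigma^2}{2}\int_{T-s}^{T}\varphi(u)\,du}$, so that $\varphi(t)=\tfrac{2}{\sigma^2}\,z'(T-t)/z(T-t)$; global existence and uniqueness are then free (linear equations do not blow up), positivity of $\varphi$ is obtained by proving $z>0$ and $z'>0$ through a comparison with the explicitly solvable case $g\equiv 0$ (Theorem 3.XVI in Walter), and the closed form \eqref{eq:sol-riccati-gamma} drops out of the explicit solution of the linear equation when $g$ is constant. You instead stay with the Riccati equation itself: Picard--Lindel\"of for local well-posedness, a barrier argument at $\varphi=0$ (using $F(t,0)=-(1+bg(t))<0$) for positivity, an a priori bound $0<\phi(s)\le\phi(0)+\int_0^s(1+bg(T-u))\,du$ in reversed time to exclude blow-up, and separation of variables with partial fractions for the autonomous case. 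Your route is more elementary and self-contained, at the price of supplying the no-blow-up step by hand; the paper's linearization buys global existence for free and, more importantly, sets up the $z$-machinery that is reused in Remark~\ref{oss:Riccati-parameters} to get differentiability of the solution in the parameters, which a direct approach would have to address separately.

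One cosmetic slip in your final sanity check: the bound $(\psi-x_-)-(\psi-x_+)e^{-k(\gamma)(T-t)}\ge x_+-x_-$ holds only when $\psi\ge x_+$; for $0\le\psi<x_+$ both summands $(\psi-x_-)$ and $-(\psi-x_+)e^{-k(\gamma)(T-t)}$ are positive, so the denominator is still strictly positive, but it is then bounded \emph{above}, not below, by $x_+-x_-$. This does not affect the validity of the derivation.
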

\begin{proof}${}$\\
It is well-known that any equation of the Riccati type can always be reduced to the second order linear ODE in $[0,+\infty)$ (see, e.g., Polyanin and Zaitsev~\cite{PolZai}) by a suitable substitution. In our case the substitution is
\begin{equation}\label{eq:SubstitutionRiccati}
z(s)=\mathit{e}^{\frac{\sigma^2}{2}\int_{T-s}^T\varphi(u)\,du},
\end{equation}
and  the   equation is
\begin{equation}\label{eq:RiccatiEq2}\begin{cases}
&z^{\prime\,\prime}(s) = (1+bg(T-s)) \frac{\sigma^2}{2} z(s) - a z^\prime(s),\quad s\in (0,\infty)\\
&z(0)=1, \quad z^\prime(0)=  \frac{\sigma^2}{2}  \psi.
\end{cases}\end{equation}
Since Eq~\eqref{eq:RiccatiEq2} has continuous coefficients (and hence bounded on every bounded interval), existence and uniqueness follow by standard results.

Formally, by~\eqref{eq:SubstitutionRiccati},
\[\varphi(t)=\frac{2}{\sigma^2}\,  \frac{z^\prime(T-t)}{z(T-t)},   \quad t\in (-\infty, T), \]
the final condition $\varphi(T)=\psi$ being obviously satisfied.
The above solution is well defined and positive in $(-\infty, T)$ under the (sufficient) condition that $z(s)$ and $z^\prime(s)$ are positive in $(0,\infty)$.

To prove that the unique solution $z(s)$ is positive in $(0,\infty)$, together with its derivative $z^\prime(s)$, we  compare Eq.~\eqref{eq:RiccatiEq2} with the following differential equation
\begin{equation}\label{eq:RiccatiEq0}\begin{cases}
&z^{\prime\,\prime}_0(s) = \frac{\sigma^2}{2} z_0(s) - a z^\prime_0(s),\\
&z_0(0)=1, \quad z^\prime_0(0)  =\frac{\sigma^2}{2}  w,
\end{cases}\end{equation}
where $0\leq w\leq \psi$, i.e., Eq.~\eqref{eq:RiccatiEq2}, with $g(t)=0$ and a (possibly) different initial condition.
\\
Eq.~\eqref{eq:RiccatiEq0} has a unique solution $z_0(s)$
$$
z_0(s)= \frac{ \sigma^2 w+ a +k}{2 k} \, e^{\frac{k-a}{2} \, s} +
\frac{- \sigma^2 w - a +k }{2 k} \, e^{-\frac{k+a}{2} \, s},
$$
where $k=\sqrt{a^2+2\sigma^2}=k(0)$.
 The solution $z_0(s)$  is positive in $(0,\infty)$, together with its derivative
$$
z^\prime_0(s)= \frac{ \sigma^2 w+ a +k }{2 k } \,\frac{k-a}{2}\,  e^{\frac{k-a}{2} \, s} -
\frac{- \sigma^2 w - a +k }{2 k} \, \frac{k+a}{2}\,e^{-\frac{k+a}{2} \, s},
$$
indeed, for  $ s\in (0,\infty)$,   $z^\prime_0(s)> z^\prime_0(0)=\frac{\sigma^2}{2}  w \geq 0$.
Moreover, setting
$$
f(s,x,p)= (1+bg(T-s)) \frac{\sigma^2}{2} x - a p,
$$
and
$$
  P_{f}u(s)= u^{\prime \prime} (s)- f(s, u(s), u^\prime(s)),\quad \text{ for  $u(\cdot)\in C^2(0,\infty)$,}
$$
so that
\[
P_{f}z_0(s)= -bg(T-s) z_0(s),\text{ and } P_{f}z(s)=0,
\]
we can use the Comparison Theorem  3.XVI  in Walter~\cite{Walter} (see p.~139, and in particular inequalities (a') and (b) therein), and assert that
\[z(s)\geq z_0(s)>0, \,\quad z^\prime(s)\geq z_0^\prime(s)> 0, \quad s\in (0,\infty) .\]
Then by~\eqref{eq:SubstitutionRiccati},
\[\varphi(t)=\frac{2}{\sigma^2}\,  \frac{z^\prime(T-t)}{z(T-t)} \geq 0, \; t\in (-\infty, T], \quad \varphi(t)>0, \; t\in (-\infty, T),  \]
i.e., the Riccati equation~\eqref{eq:RiccatiEq} with final condition $\varphi(T)=\psi$ has a unique and positive solution in $t\in (-\infty, T)$.

Finally, in the case $g(t)=\gamma>0$,    Eq.~\eqref{eq:RiccatiEq2} is obtained by replacing $\sigma^2$ with $(1+b\gamma)\,\sigma^2$, and $w$ with $\psi$ in Eq.~\eqref{eq:RiccatiEq0}, so that $z(s)$ and $z^\prime(s)$ are obtained by replacing $k$ with $k(\gamma)$ in  the explicit expressions $z_0(s)$ and $z_0^\prime(s)$.

\end{proof}

\begin{oss}\label{oss:Riccati-parameters}${}$\\
The solution $z(t)$ of Eq.~\eqref{eq:RiccatiEq2} depends on the parameter $\psi$, and has partial derivatives w.r.t.~$\psi$. Therefore also the solution $\varphi(t)$ has partial derivatives w.r.t.~$\psi$.  Similarly, if the function $g(\cdot)$ in the previous Lemma~\ref{lem:LemmaRiccati} depends on a parameter $\mu$, i.e., if we consider $g(t,\mu)$, jointly continuous,  with continuous partial derivative w.r.t.~$\mu$, then, by standard results on differentiability with respect to real parameters (see, e.g. Theorem 13.VI, p.~151 in Walter~\cite{Walter}),  the solution depends also on the parameter $\mu$, and has partial derivatives w.r.t.\@ $\mu$ and $\psi$.
\end{oss}
When $g(t)$ is a nonnegative uniformly bounded function,   one can obtain an upper bound for the solution~$\varphi(t)$ of Eq.~\eqref{eq:RiccatiEq}, under suitable hypotheses on the final condition $\psi$, as shown in the following result.

\begin{lem}\label{lem:COMPARISON-BOH}${}$\\
Assume that
\begin{align}\label{ineq:IPOTESI-BOH}
 0\leq \underline{g}\leq g_1(t)\leq g_2(t) \leq \overline{g},
\quad &\text{and}\quad
0\leq\underline{\psi}\leq \psi_1\leq \psi_2 \leq \overline{\psi}.
\intertext{
  Denote  by
\begin{align*}
 \underline{\varphi}(t)&= \Phi(t,T;\underline{\psi}, \underline{g}),\,\qquad \quad
  \overline{\varphi}(t)=\Phi(t,T;\overline{\psi}, \overline{g}),\FF
  \\
  \varphi_1(t)&=\Phi(t,T; \psi_1, g_1(\cdot)),\,\quad \varphi_2(t)=\Phi(t,T; \psi_2, g_2(\cdot)),\,
\end{align*}
 the positive solutions of the Riccati Equation~\eqref{eq:RiccatiEq} with}
g(t)= \underline{g}, \, g_1(t), \,g_2(t),\, \overline{g},\quad &\text{and}\quad  \psi=  \underline{\psi},\, \psi_1,\, \psi_2, \, \overline{\psi},\notag
\end{align}
  respectively.
\\
 Then, for $t\leq T$,
\begin{equation}\label{ineq:TESI-BOH}
 \underline{\psi} \wedge \frac{k(\underline{g}) -a}{\sigma^2}  \leq\underline{\varphi}(t) \leq
\varphi_1(t) \leq\varphi_2(t) \leq\overline{\varphi}(t)\leq \overline{\psi} \vee \frac{k(\overline{g})  -a}{\sigma^2}.
\end{equation}

\end{lem}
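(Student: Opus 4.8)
The plan is to deduce Lemma~\ref{lem:COMPARISON-BOH} from two facts about the scalar Riccati equation~\eqref{eq:RiccatiEq}: \emph{(a)} a comparison principle, saying that the (unique, nonnegative, continuous) solution $\Phi(t,T;\psi,g(\cdot))$ is nondecreasing both in the final datum $\psi$ and in the coefficient $g(\cdot)$; and \emph{(b)} a trapping property, saying that for a \emph{constant} coefficient $g\equiv\gamma\ge 0$ the trajectory $t\mapsto\Phi(t,T;\psi,\gamma)$ never leaves the closed interval with endpoints $\psi$ and the equilibrium value $\varphi^\ast_\gamma:=\frac{k(\gamma)-a}{\sigma^2}$ (with $k(\cdot)$ as in~\eqref{ineq:IPOTESI-2-BOH}). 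Granting \emph{(a)}, the three inner inequalities $\underline\varphi(t)\le\varphi_1(t)\le\varphi_2(t)\le\overline\varphi(t)$ in~\eqref{ineq:TESI-BOH} follow immediately by applying it along the chain $(\underline\psi,\underline g)\le(\psi_1,g_1)\le(\psi_2,g_2)\le(\overline\psi,\overline g)$ (each step changing the two data only in the allowed direction, using~\eqref{ineq:IPOTESI-BOH}). Granting \emph{(b)} with $(\psi,\gamma)=(\underline\psi,\underline g)$ and with $(\psi,\gamma)=(\overline\psi,\overline g)$ gives $\underline\varphi(t)\ge\underline\psi\wedge\varphi^\ast_{\underline g}$ and $\overline\varphi(t)\le\overline\psi\vee\varphi^\ast_{\overline g}$, which are the two outermost inequalities.

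For \emph{(a)}, fix $T$ and let $\varphi=\Phi(\cdot,T;\psi,g(\cdot))$ and $\tilde\varphi=\Phi(\cdot,T;\tilde\psi,\tilde g(\cdot))$ with $\psi\le\tilde\psi$ and $g(t)\le\tilde g(t)$; by Lemma~\ref{lem:LemmaRiccati} both are defined, continuous and nonnegative on $(-\infty,T]$. Subtracting the two copies of~\eqref{eq:RiccatiEq}, the difference $d:=\tilde\varphi-\varphi$ solves the linear equation
\[
d'(t)=c(t)\,d(t)-b\bigl(\tilde g(t)-g(t)\bigr),\qquad d(T)=\tilde\psi-\psi\ge 0,\qquad c(t):=\tfrac12\sigma^2\bigl(\varphi(t)+\tilde\varphi(t)\bigr)+a .
\]
Since $c$ is continuous, integrating backward from $T$ with integrating factor $\mathrm{e}^{-C(t)}$, where $C(t):=\int_t^T c(u)\,du$, yields
\[
d(t)=\mathrm{e}^{-C(t)}\Bigl(d(T)+b\!\int_t^T\mathrm{e}^{C(s)}\bigl(\tilde g(s)-g(s)\bigr)\,ds\Bigr)\ \ge\ 0,\qquad t\le T,
\]
because $d(T)\ge 0$ and $\tilde g\ge g$. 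Hence $\varphi\le\tilde\varphi$ on $(-\infty,T]$. (Alternatively one may invoke the comparison Theorem~3.XVI of Walter~\cite{Walter}, already used in the proof of Lemma~\ref{lem:LemmaRiccati}, applied either directly to~\eqref{eq:RiccatiEq} or to the associated second order linear equation~\eqref{eq:RiccatiEq2}.)

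For \emph{(b)}, with $g\equiv\gamma$ the constant function $\varphi^\ast_\gamma=\frac{k(\gamma)-a}{\sigma^2}$, which is the nonnegative root of $\tfrac12\sigma^2 x^2+ax-(1+b\gamma)=0$, is itself a solution of~\eqref{eq:RiccatiEq}; by uniqueness the nonnegative trajectory through $\psi$ cannot cross it, so it stays on one side of $\varphi^\ast_\gamma$. Since $\tfrac12\sigma^2 x^2+ax-(1+b\gamma)$ is negative on $[0,\varphi^\ast_\gamma)$ and positive on $(\varphi^\ast_\gamma,\infty)$, it follows that $\Phi(\cdot,T;\psi,\gamma)$ is decreasing in $t$ and confined to $[\psi,\varphi^\ast_\gamma)$ when $\psi<\varphi^\ast_\gamma$, and increasing in $t$ and confined to $[\varphi^\ast_\gamma,\psi]$ when $\psi\ge\varphi^\ast_\gamma$; in either case, for $t\le T$,
\[
\psi\wedge\varphi^\ast_\gamma\ \le\ \Phi(t,T;\psi,\gamma)\ \le\ \psi\vee\varphi^\ast_\gamma ,
\]
and specialising to $(\underline\psi,\underline g)$ and $(\overline\psi,\overline g)$ closes~\eqref{ineq:TESI-BOH}.

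The main obstacle is step \emph{(a)} for a \emph{non-constant} coefficient $g(\cdot)$: the explicit solution~\eqref{eq:sol-riccati-gamma} is available only in the constant case, so monotonicity in $g(\cdot)$ has to come from the differential inequality above, and one must be careful that the comparison is carried out \emph{backward} in time from $t=T$ — it is precisely this direction that turns the sign condition $\tilde g\ge g$ into $\tilde\varphi\ge\varphi$ rather than the reverse. Everything else (existence, uniqueness, nonnegativity, continuity of the solutions) is already supplied by Lemma~\ref{lem:LemmaRiccati}.
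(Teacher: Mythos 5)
Your proof is correct, and its overall decomposition is the same as the paper's: trap the constant-coefficient solutions between the final datum and the equilibrium value, and order the four solutions by a comparison principle that is monotone in both $\psi$ and $g(\cdot)$. The execution differs in both steps, each time in a more self-contained direction. For the middle inequalities the paper simply invokes Walter's comparison Theorem~9.IX \cite{Walter}, checking that the defect $P_2\varphi_1(t)=b[g_2(t)-g_1(t)]\ge 0$ has the right sign; you instead write the exact linear equation satisfied by the difference $d=\tilde\varphi-\varphi$ (the factorization $\tilde\varphi^2-\varphi^2=(\tilde\varphi+\varphi)\,d$ makes this an identity, not an inequality) and solve it backward from $T$ with an integrating factor, which makes the sign bookkeeping explicit and removes the external citation; your parenthetical remark correctly identifies the paper's route as the alternative. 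For the outer inequalities the paper reads the monotonicity in $t$ off the closed-form expression~\eqref{eq:sol-riccati-gamma} and arrives at~\eqref{eq:inequalities-varphi}; you obtain the same two-sided bound by noting that $\frac{k(\gamma)-a}{\sigma^2}$ is the nonnegative root of $\tfrac12\sigma^2x^2+ax-(1+b\gamma)$, hence an equilibrium that the trajectory through $\psi$ cannot cross by uniqueness, so the solution is monotone and confined between $\psi$ and that equilibrium. Both routes are valid; yours avoids the explicit formula entirely and would survive even if~\eqref{eq:sol-riccati-gamma} were unavailable, while the paper's is shorter given that the formula has already been established in Lemma~\ref{lem:LemmaRiccati}.
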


\begin{proof}${}$\\
First of all we observe that when  $g(t)=\gamma$  the solutions $\varphi(t)$ of the Riccati equation~\eqref{eq:RiccatiEq} are decreasing, constant, or increasing  in  $t\in (-\infty,T]$,   when the final condition  $\psi$ is less than, equal to, or greater than $\frac{k(\gamma) -a}{\sigma^2}$, respectively, as can be easily deduced from the explicit expression  $ \varphi(t)=\varphi(t,T;\psi, \gamma)$ (see \eqref{eq:sol-riccati-gamma}).

 Therefore
\begin{equation}\label{eq:inequalities-varphi}
\psi \wedge \frac{k(\gamma)-a}{\sigma^2} \leq \Phi(t,T;\psi, \gamma) \leq \psi \vee \frac{k(\gamma)-a}{\sigma^2},\quad t\in (-\infty,T],
\end{equation}
and the first inequality  in~\eqref{ineq:TESI-BOH}  is immediately achieved, together with the last one.

The other inequalities in~\eqref{ineq:TESI-BOH} can be achieved by standard comparison theorems (see, e.g., Theorem 9.IX and its Corollary in Walter~\cite{Walter} (see p.~96): focusing on the solutions $\varphi_1(t)$ and $\varphi_2(t)$, it is sufficient to note that $\varphi_1(T)\leq \varphi_2(T)$ by~\eqref{ineq:IPOTESI-BOH}, and that $P_2\varphi_1(t)=b[g_2(t)-g_1(t)] \geq 0= P_2\varphi_2(t)$  in $(-\infty, T]$,
where $P_2\varphi(t)=\varphi^\prime(t)- F(\varphi(t)) + b g_2(t)$,
where $F$ is the locally Lipschitz function $F(x)=\frac{1}{2}\,\sigma^2\, x^2+ a x -1$.
\end{proof}

\begin{lem}\label{lem:LemmaTecnico} ${}$\\
Let $a$, $b$ and $\sigma$ be  constants with $a,\,\sigma>0$, and $b\geq0$.
Let  $w \in \left[0,\frac{k(0)-a}{\sigma^2}\right)$,
 where the function~$k(x)$ is defined as in~\eqref{ineq:IPOTESI-2-BOH} of Lemma~\ref{lem:LemmaRiccati}.
Then  the sequence of ordinary differential equations so defined
\begin{equation}\label{eq:SequenceODE0}
\begin{cases}
\frac{d}{d t}\varphi_0(t)=&\frac{1}{2}\sigma^2 \varphi^2_0(t)+
a\,\varphi_0(t)-1
\\ &\phantom{-b\,\alpha(t+\tau,T;w)\quad}\phantom{x}\text{for $T-\tau\leq t\leq T$,}\\
\varphi_0(T)=&w,
\end{cases}
\end{equation}
and
\begin{equation}\label{eq:SequenceODEj}\begin{cases}
\frac{d}{dt}\varphi_j(t)=&\frac{1}{2}\sigma^2\varphi^2_j(t)+a\,
\varphi_j(t)-1-b\,\varphi_{j-1}(t+\tau)\\
&\phantom{-b\,\alpha(t+\tau,T;w)\quad}\phantom{xxxxxxxxxx}\text{for $T-(j+1)\tau\leq t\leq T-j\tau$},\\
\varphi_j(T-j\tau)=&\varphi_{j-1}((T-j\tau)^+).\\
\end{cases}
\end{equation}
has a unique solution $\{\varphi_j(t,T;w), \; j\geq 0\}$.
\\
Morever, setting
\begin{align*}
\overline{\gamma}:= \sup_{t\in[t_0,T]}\gamma(t), \quad
\overline{w}_0&:= \frac{k(0)-a}{\sigma^2}\vee \overline{\gamma}, \quad
\overline{w}_{j+1}:= \overline{w}_{j}\vee\frac{k(\overline{w}_{j})-a}{\sigma^2}, \, j\geq 0,
\\
\underline{\varphi}(t)&= \Phi(t,T;w,0),\quad \qquad \quad  \text{ for $t\in [t_0,T]$,}
 \intertext{and}
   \overline{\varphi}_j(t)&=\Phi(t,T-j\tau;\overline{w}_{j},\overline{w}_{j}),\quad \text{ for $t\in [T-(j+1)\tau,T-j\tau]$,}
\end{align*}
 the solutions satisfy the following inequalities
$$
w \leq \underline{\varphi}(t)\leq \varphi_j(t,T;w)\leq \overline{\varphi}_j(t)\leq  \overline{w}_{j+1}
\quad j\geq 0,\quad \text{for $T-(j+1)\tau\leq t\leq T-j\tau$,}
$$
Furthermore the solutions
 are right-differentiable w.r.t.\@ $w$.
\end{lem}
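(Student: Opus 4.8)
The plan is to argue by induction on $j\geq 0$, exploiting the recursive structure of the system: once $\varphi_{j-1}$ has been constructed on $[T-j\tau,T-(j-1)\tau]$, equation~\eqref{eq:SequenceODEj} for $\varphi_j$ on $[T-(j+1)\tau,T-j\tau]$ is a Riccati equation of exactly the form~\eqref{eq:RiccatiEq} of Lemma~\ref{lem:LemmaRiccati}, with final time $T-j\tau$ in place of $T$, final datum $\psi=\varphi_{j-1}((T-j\tau)^+)$, and continuous nonnegative forcing $g(t)=\varphi_{j-1}(t+\tau)$; for $j=0$ one simply has $g\equiv 0$, i.e.\ \eqref{eq:RiccatiEq} with $\gamma=0$, whose solution is $\Phi(\cdot,T;w,0)=\underline{\varphi}$ on $[T-\tau,T]$. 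Extending $g$ to a continuous function on $(-\infty,T-j\tau)$ (say, by a constant), Lemma~\ref{lem:LemmaRiccati} yields a unique solution, positive in the interior, and I restrict it to $[T-(j+1)\tau,T-j\tau]$ to define $\varphi_j(\cdot,T;w)$; since the forcing term and the final datum are carried over continuously, the pieces glue into a single continuous function on $(-\infty,T]$. Uniqueness at each stage is Picard--Lindel\"{o}f, the right-hand side being polynomial in $\varphi_j$; the only way the construction could fail is finite-time blow-up, which is precisely what the a priori bounds preclude, so existence on the full interval and the bounds must be established together.

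For the bounds I would proceed as follows. Since $w<\frac{k(0)-a}{\sigma^2}$, inequality~\eqref{eq:inequalities-varphi} with $\psi=w$ and $\gamma=0$ gives $w=w\wedge\frac{k(0)-a}{\sigma^2}\leq\Phi(t,T;w,0)=\underline{\varphi}(t)$ for all $t\leq T$, which is the leftmost inequality of the thesis (a one-time observation, not part of the induction). The base case $j=0$ is then immediate: $\varphi_0(\cdot,T;w)=\underline{\varphi}$ on $[T-\tau,T]$, and Lemma~\ref{lem:COMPARISON-BOH} with $g_1\equiv 0\leq g_2\equiv\overline{w}_0$ and $w\leq\overline{w}_0$ gives $\underline{\varphi}(t)\leq\Phi(t,T;\overline{w}_0,\overline{w}_0)=\overline{\varphi}_0(t)\leq\overline{w}_0\vee\frac{k(\overline{w}_0)-a}{\sigma^2}=\overline{w}_1$ (only $\overline{w}_0\geq\frac{k(0)-a}{\sigma^2}>w$ is used; the term $\overline{\gamma}$ in $\overline{w}_0$, which does not even appear in the equations~\eqref{eq:SequenceODE0}--\eqref{eq:SequenceODEj}, is a harmless over-estimate). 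For the inductive step, suppose $\underline{\varphi}\leq\varphi_{j-1}\leq\overline{w}_j$ on $[T-j\tau,T-(j-1)\tau]$; then $0\leq g(t)=\varphi_{j-1}(t+\tau)\leq\overline{w}_j$ on $[T-(j+1)\tau,T-j\tau]$, while the boundary condition gives $\underline{\varphi}(T-j\tau)\leq\varphi_{j-1}((T-j\tau)^+)=\varphi_j(T-j\tau)\leq\overline{w}_j$. Restricting $\underline{\varphi}$ to $(-\infty,T-j\tau]$ (where, by uniqueness, it coincides with $\Phi(\cdot,T-j\tau;\underline{\varphi}(T-j\tau),0)$) and comparing it, together with $\overline{\varphi}_j(\cdot)=\Phi(\cdot,T-j\tau;\overline{w}_j,\overline{w}_j)$, against $\varphi_j$ via Lemma~\ref{lem:COMPARISON-BOH}---now all three share the common final time $T-j\tau$---yields $\underline{\varphi}(t)\leq\varphi_j(t)\leq\overline{\varphi}_j(t)$ on $[T-(j+1)\tau,T-j\tau]$, and then $\overline{\varphi}_j(t)\leq\overline{w}_j\vee\frac{k(\overline{w}_j)-a}{\sigma^2}=\overline{w}_{j+1}$ by~\eqref{eq:inequalities-varphi}. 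Equivalently, and perhaps cleaner to write out in full, one sets $d=\varphi_j-\underline{\varphi}$ and $e=\overline{\varphi}_j-\varphi_j$: each solves a \emph{linear} equation $\chi'=\ell(t)\chi+h(t)$ with $\ell$ continuous and $h$ of a definite sign ($h=-bg\leq 0$ for $d$; $h=b(g-\overline{w}_j)\leq 0$ for $e$), so that $\chi$ times the appropriate integrating factor is monotone in $t$; being $\geq 0$ at the right endpoint $t=T-j\tau$, both $d$ and $e$ are $\geq 0$ throughout, which simultaneously traps $\varphi_j$ between the two bounded continuous functions $\underline{\varphi}$ and $\overline{\varphi}_j$ and thereby forbids blow-up.

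For the right-differentiability in $w$ I would induct once more, carrying the strengthened hypothesis that $\varphi_{j-1}(t,T;w)$ is jointly continuous in $(t,w)$ and has a partial right-derivative in $w$ that is again jointly continuous; this holds for $j=0$ by Lemma~\ref{lem:LemmaRiccati} and Remark~\ref{oss:Riccati-parameters}. At step $j$ both the forcing $g(t,w)=\varphi_{j-1}(t+\tau,T;w)$ and the final datum $\psi(w)=\varphi_{j-1}((T-j\tau)^+,T;w)$ then depend on $w$ with continuous right-derivatives, so Remark~\ref{oss:Riccati-parameters} (combined with the standard smooth dependence on the final condition) shows that $\varphi_j(t,T;w)=\Phi(t,T-j\tau;\psi(w),g(\cdot,w))$ inherits the same regularity, which closes the induction. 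The main obstacle, to my mind, is not any individual estimate but the fact that at each stage existence and the a priori bounds must be obtained in tandem---one cannot simply quote Lemma~\ref{lem:LemmaRiccati} ``globally'' for the forced equation without first knowing that the forcing term $\varphi_{j-1}(\cdot+\tau)$ is a genuine continuous function on the full interval, and that in turn relies on the inductive bounds---compounded by the minor but fiddly bookkeeping caused by the shift of the final time from $T$ to $T-j\tau$ each time the comparison results of Lemmas~\ref{lem:LemmaRiccati} and~\ref{lem:COMPARISON-BOH} are invoked.
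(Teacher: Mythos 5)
Your proposal is correct and follows essentially the same route as the paper: induction on $j$, with each stage recognized as the Riccati equation of Lemma~\ref{lem:LemmaRiccati} with final time $T-j\tau$, forcing $g(t)=\varphi_{j-1}(t+\tau)$ and final datum $\varphi_{j-1}((T-j\tau)^+)$, the sandwich $w\le\underline{\varphi}\le\varphi_j\le\overline{\varphi}_j\le\overline{w}_{j+1}$ obtained from Lemma~\ref{lem:COMPARISON-BOH} (your explicit linear-equation-for-the-difference variant is just that comparison argument unwound), and right-differentiability in $w$ propagated through the induction via Remark~\ref{oss:Riccati-parameters}. The only stylistic difference is that you treat blow-up as a live concern to be ruled out by the bounds, whereas the paper gets global existence directly from Lemma~\ref{lem:LemmaRiccati} once the forcing is known to be continuous and nonnegative from the previous step; this does not affect correctness.
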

\begin{proof}${}$\\
The result is achieved by induction with the following steps:
\begin{enumerate}
\item
Since $\varphi_0(t,T;w)=\underline{\varphi}(t)$, for $T-\tau \leq t\leq T$, all the statements hold for $j=0$, by
the following chain of inequalities
    $$w=w \wedge \frac{k(0) -a}{\sigma^2} \leq\frac{k(0) -a}{\sigma^2}\leq \overline{w}_{0}  \leq\overline{w}_{0} \vee \frac{k(\overline{w}_{0}) -a}{\sigma^2}= \overline{w}_{1}, $$
 the previous Lemma~\ref{lem:LemmaRiccati}, Lemma~\ref{lem:COMPARISON-BOH}, with $\underline{g}=g_1(t),= g_2(t)=0$, $\overline{g}=\overline{w}_0$, $\underline{\psi}=\psi_1=\psi_2=w$, $\overline{\psi}=\overline{w}_0$, and  Remark~\ref{oss:Riccati-parameters}.
\item
For $j\geq 1$, in the interval $[T-(j+1)\tau, T-j\tau)$, the $j-th$ differential equation is
\begin{equation}\label{eq:Riccati-F}
\frac{d}{dt}\varphi(t)= F(\varphi(t)) - b g(t,w),
\end{equation}
  with $F(x)=\frac{1}{2}\,\sigma^2\, x^2+ a x -1$,  $g(t,w)=\varphi_{j-1}(t+\tau,T;w)$, and
  final condition
  $$\varphi_j(T-j\tau,T;w)= \varphi_{j-1}((T-j\tau)^+,T;w). $$
   By Lemma~\ref{lem:LemmaRiccati}, we can write
  $$
\varphi_{j}(t,T;w)=\Phi\big(t, T-j \tau;  \varphi_{j-1}((T-j\tau)^+,T;w), \varphi_{j-1}(\cdot+\tau,T;w)\big), \quad j\geq 1.
$$
   In the same interval   $\underline{\varphi}(t)$ satisfies the same equation \eqref{eq:Riccati-F} with  $g(t,w)=0$ and final condition  $\underline{\varphi}(T-j\tau)$;  similarly $\overline{\varphi}_j(t)$   satisfies the same equation with  $g(t,w)=\overline{w}_j$, and  final condition $\overline{w}_j$.

\item Assuming that the statements hold for $j-1$,     the final condition of the $j-th$ system
 is such that
 $$
w\leq\underline{\varphi}(T-j\tau) \leq \varphi_j(T-j\tau,T;w)= \varphi_{j-1}((T-j\tau)^+,T;w)\leq \overline{\varphi}_{j-1}(T-j\tau)\leq  \overline{w}_j,
 $$
 and furthermore
 $$
0(\leq w) \leq \varphi_{j-1}(t+\tau,T;w)\leq \overline{w}_j.
$$
\end{enumerate}
Then
\\\emph{\textbf{(i)}} the inequalities hold for $j$, by applying the previous Lemma~\ref{lem:LemmaRiccati}, Lemma~\ref{lem:COMPARISON-BOH} with $T$ replaced by $T-j\tau$,  and
\begin{align*}
\underline{g}=0, \;\quad \overline{g}=\overline{w}_j,  \qquad \qquad \qquad\qquad &\quad \underline{\psi}=\underline{\varphi}(T-j\tau),  \;\,\quad  \overline{\psi}=\overline{w}_j,\;\quad
 \\
 g_1(t)=g_2(t)=\varphi_{j-1}(t+\tau,T;w),\quad& \quad \psi_1=\psi_2=\varphi_{j-1}((T-j\tau)^+,T;w),
\end{align*}
 so that $\varphi_1(t)=\varphi_2(t)=\varphi_j(t,T;w)$;
 \\
 \emph{\textbf{(ii)}}
  the differentiability properties follow  by Remark~\ref{oss:Riccati-parameters}, taking into account the induction step.

\end{proof}

\begin{oss}\label{rem:LemmaTecnico}
The differential system~\eqref{eq:term_structure_diff_system_delay-BP} has locally Lipschitz coefficients, nevertheless,
when $w\in[0, \frac{k(0)-a}{\sigma^2})$, it is equivalent to a differential system with globally Lipschitz coefficients, thanks to the previous Lemma~\ref{lem:LemmaTecnico}:
\\
 The nondecreasing sequence $\{\overline{w}_j, \, j\geq 0\}$ in
 the previous Lemma~\ref{lem:LemmaTecnico} has a finite limit $L$,
since
$$
x\vee \frac{k(x)-a}{\sigma^2}
=
\begin{cases}
\frac{k(x)-a}{\sigma^2} & x \in [0, \frac{b-a+\sqrt{(b-a)^2 + 2\sigma^2}}{\sigma^2}]\FF
\\
x & \text{otherwise}.
\end{cases}
$$
 and therefore the functions $\alpha^\mathbb{Q}_r(t,T;w)\in [0,L] $, for all $w\in[0, \frac{k(0)-a}{\sigma^2})$, and $t\leq T$.
\end{oss}


\begin{thebibliography}{10}

\bibitem{ArrHuMohaPap}
Mercedes Arriojas, Yaozhong Hu, Salah-Eldin Mohammed, and Gyula Pap.
\newblock A delayed {B}lack and {S}choles formula.
\newblock {\em Stoch. Anal. Appl.}, 25(2):471--492, 2007.

\bibitem{JOFI:JOFI4011}
Ka~C. Chan, George~A. Karolyi, Francis~A. Longstaff, and Anthony~B. Sanders.
\newblock An empirical comparison of alternative models of the short-term
  interest rate.
\newblock {\em The Journal of Finance}, 47(3):1209--1227, 1992.

\bibitem{cheridito2007market}
Patrick Cheridito, Damir Filipovi{\'c}, and Robert~L Kimmel.
\newblock Market price of risk specifications for affine models: Theory and
  evidence.
\newblock {\em Journal of Financial Economics}, 83(1):123--170, 2007.

\bibitem{ChereditoFilYor2005}
Patrick Cheridito, Damir Filipovi{\'c}, and Marc Yor.
\newblock Equivalent and absolutely continuous measure changes for
  jump-diffusion processes.
\newblock {\em Ann. Appl. Probab.}, 15(3):1713--1732, 2005.

\bibitem{Cherny}
Alexander~S. Cherny.
\newblock On the uniqueness in law and the pathwise uniqueness for stochastic
  differential equations.
\newblock {\em Theory of Probability \& Its Applications}, 46(3):406--419,
  2002.

\bibitem{CIR:SIR}
John~C. Cox, Jonathan~E. Ingersoll, Jr., and Stephen~A. Ross.
\newblock A theory of the term structure of interest rates.
\newblock {\em Econometrica}, 53(2):385--407, 1985.

\bibitem{DeelstraPhD}
Griselda Deelstra.
\newblock {\em Long-term returns in stochastic interest rate models}.
\newblock PhD thesis, Free University of Brussels, 1995.

\bibitem{DDLongTerm}
Griselda Deelstra and Fred Delbaen.
\newblock Long-term returns in stochastic interest rate models.
\newblock {\em Insurance Math. Econom.}, 17(2):163--169, 1995.

\bibitem{deelstra_delbaen}
Griselda Deelstra and Fred Delbaen.
\newblock Convergence of discretized stochastic (interest rate) processes with
  stochastic drift term.
\newblock {\em Appl. Stochastic Models Data Anal.}, 14(1):77--84, 1998.

\bibitem{HighamMao}
Desmond Higham and Xuerong Mao.
\newblock Convergence of {M}onte {C}arlo simulations involving the
  mean-reverting square root process.
\newblock {\em Journal of Computational Finance}, 8(3):35--62, 2005.

\bibitem{Hull01101990}
John~C. Hull and Alan White.
\newblock Pricing interest-rate-derivative securities.
\newblock {\em Review of Financial Studies}, 3(4):573--592, 1990.

\bibitem{Jacod}
Jean Jacod.
\newblock Une condition d'existence et d'unicit\'e pour les solutions fortes
  d'\'equations diff\'erentielles stochastiques.
\newblock {\em Stochastics}, 4(1):23--38, 1980/81.

\bibitem{karatzas_shreve}
Ioannis Karatzas and Steven~E. Shreve.
\newblock {\em Brownian motion and stochastic calculus}, volume 113 of {\em
  Graduate Texts in Mathematics}.
\newblock Springer-Verlag, New York, second edition, 1991.

\bibitem{lamberton_lapeyre}
Damien Lamberton and Bernard Lapeyre.
\newblock {\em Introduction to stochastic calculus applied to finance}.
\newblock Chapman \& Hall, London, 1996.
\newblock Translated from the 1991 French original by Nicolas Rabeau and
  Fran{\c{c}}ois Mantion.

\bibitem{MusieleRutkowski}
Marek Musiela and Marek Rutkowski.
\newblock {\em Martingale methods in financial modelling}, volume~36 of {\em
  Applications of Mathematics (New York)}.
\newblock Springer-Verlag, Berlin, 1997.

\bibitem{PolZai}
Andrei~D. Polyanin and Valentin~F. Zaitsev.
\newblock {\em Handbook of exact solutions for ordinary differential
  equations}.
\newblock Chapman \& Hall/CRC, Boca Raton, FL, second edition, 2003.

\bibitem{RevuzYor}
Daniel Revuz and Marc Yor.
\newblock {\em Continuous martingales and {B}rownian motion}, volume 293 of
  {\em Grundlehren der Mathematischen Wissenschaften [Fundamental Principles of
  Mathematical Sciences]}.
\newblock Springer-Verlag, Berlin, third edition, 1999.

\bibitem{Walter}
Wolfgang Walter.
\newblock {\em Ordinary differential equations}, volume 182 of {\em Graduate
  Texts in Mathematics}.
\newblock Springer-Verlag, New York, 1998.
\newblock Translated from the sixth German (1996) edition by Russell Thompson,
  Readings in Mathematics.

\bibitem{WuMaoChen}
Fuke Wu, Xuerong Mao, and Kan Chen.
\newblock The {C}ox-{I}ngersoll-{R}oss model with delay and strong convergence
  of its {E}uler-{M}aruyama approximate solutions.
\newblock {\em Appl. Numer. Math.}, 59(10):2641--2658, 2009.

\end{thebibliography}
   
\end{document}